\def\squarebox#1{\hbox to #1{\hfill\vbox to #1{\vfill}}}
\newtheorem{Thm}{Theorem}[section]
 \newtheorem{cor}{Corollary}[section]
\newtheorem{Def}{Definition}[section]
\newtheorem{lem}{Lemma}[section]
\numberwithin{equation}{section}
\newcommand{\bel}{\begin{equation} \label}
\newcommand{\ee}{\end{equation}}
\newcommand{\re}{\mathfrak R}
\newcommand{\R}{\mathbb{R}}
\def\epsilon{\varepsilon}
\def\phi {\varphi}
\newtheorem{rem}{Remark}[section]
\newtheorem{prop}{Proposition}[section]
\providecommand{\abs}[1]{\left\lvert#1\right\rvert}
\providecommand{\norm}[1]{\left\lVert#1\right\rVert}
\numberwithin{equation}{section}
\renewcommand{\d}{\textrm{d}}
\renewcommand{\leq}{\leqslant}
\renewcommand{\geq}{\geqslant}
\providecommand{\abs}[1]{\left\lvert#1\right\rvert}
\providecommand{\norm}[1]{\left\lVert#1\right\rVert}
\def\beq{\begin{equation}}
\def\eeq{\end{equation}}
\newcommand{\bea}{\begin{eqnarray}}
\newcommand{\eea}{\end{eqnarray}}
\newcommand{\beas}{\begin{eqnarray*}}
\newcommand{\eeas}{\end{eqnarray*}}
\title[Determining the wave equation from a single passive measurement]{Uniqueness and stability in determining the wave equation from a single passive boundary measurement 
}
\author[Yavar Kian]{Yavar Kian}
\address{Univ Rouen Normandie, CNRS, Normandie Univ, LMRS UMR 6085, F-76000 Rouen, France.}
\email{	yavar.kian@univ-rouen.fr}
\author[Hongyu Liu]{Hongyu Liu}
\address{Department of Mathematics, City University of Hong Kong, Kowloon Tong, Hong Kong SAR, China.}
\email{	hongyliu@cityu.edu.hk}
\begin{document}

\maketitle

\begin{abstract} This article addresses the inverse problem of simultaneously recovering both the wave speed coefficient and an unknown initial condition (acting as the source) for the multidimensional wave equation from a single passive boundary measurement. Specifically, we establish uniqueness and H\"{o}lder stability estimates for determining these parameters in the wave equation on \(\mathbb{R}^3\), where only a single boundary measurement of the solution--generated by the unknown source--is available. Our work connects to thermoacoustic and photoacoustic tomography (TAT/PAT) for the physically relevant case of piecewise constant sound speeds. We significantly relax the stringent conditions previously required for resolving this problem, extending results to general classes of piecewise constant sound speeds over inclusions with unknown locations. Moreover, we do not require decay properties in time of solutions to the wave equation, which enables our study to accommodate a much broader class of unknown sources. The approach combines low frequency-domain solution representations with distinctive properties of elliptic and hyperbolic equations.

\medskip
\noindent
{\bf  Keywords:} Inverse boundary problems, passive boundary measurement, wave equation, photoacoustic tomography, simultaneous recovery, uniqueness, stability.

\medskip
\noindent
{\bf Mathematics subject classification 2010 :} 35R30, 35L05.
\end{abstract}

\section{Introduction}\label{sect:1}

We consider the initial value problem (IVP in short) for the wave equation:
\begin{equation}\label{eq1}
\left\{
\begin{array}{ll}
c^{-2}(x)\partial_t^2u(t, x) - \Delta u(t, x) = 0, & (t, x)\in \mathbb{R}_+ \times \mathbb{R}^3, \\
u(0,x) = f(x), \quad \partial_tu(0,x) = 0, & x \in \mathbb{R}^3,
\end{array}
\right.
\end{equation}
where $f \in H^1(\mathbb{R}^3)$ is compactly supported, and $c \in L^\infty(\mathbb{R}^3)$ is a strictly positive piecewise constant function  satisfying the condition
\begin{equation}\label{ccc}c(x)\geq a>0,\quad x\in\R^3,\end{equation}
for some positive constant   $a>0$. Let $\Omega \subset \mathbb{R}^3$ be a bounded open domain with Lipschitz  boundary such that $\textrm{supp}(f) \subset \overline{\Omega}$ and $c$ is constant on $\mathbb{R}^3 \setminus \overline{\Omega}$. Applying Lemma \ref{l11} in the Appendix, we prove that problem \eqref{eq1} admits a unique solution $u\in C([0,+\infty); H^1(\mathbb{R}^3)) \cap C^1([0,+\infty); L^2(\mathbb{R}^3))$. This article addresses the inverse problem:
\begin{center}
{\bf (IP)} \emph{Simultaneously determine the wave speed $c$ and initial data $f$, namely the anomalous source, from the passive boundary measurement $u(t, x)|_{(t, x)\in \mathbb{R}_+ \times \partial\Omega}$.}
\end{center}
We investigate both unique identifiability and stability estimate for \textbf{(IP)}.

This inverse problem is motivated by applications requiring medium characterization without active excitation, where only passive measurements induced by anomalous sources are feasible. It arises directly in photoacoustic tomography (PAT) and thermoacoustic tomography (TAT) \cite{KRK,W}, where acoustic waves are generated via optical absorption (PAT) or electromagnetic excitation (TAT). These modalities have significant biomedical applications including tumor detection, breast cancer diagnosis, and vascular imaging.

The TAT/PAT reconstruction involves two inversions:
\begin{enumerate}
    \item \emph{Acoustic inversion}: Recover initial pressure $f$ from the boundary measurement $u|_{\partial\Omega}$ (modeled by \eqref{eq1}), where $c$ represents the medium's sound speed \cite{DSK,T};
    \item \emph{Optical inversion}: Recover absorption properties from $f$. 
\end{enumerate}
In practical scenarios, the sound speed $c$ is typically unknown \cite{JW}, necessitating simultaneous recovery of both $c$ and $f$ from a single passive measurement—precisely the \textbf{(IP)} formulation.

Beyond its physical and practical motivations, the inverse problem \textbf{(IP)} remains one of the most challenging open problems in inverse problem theory due to its inherent nonlinearity, severe ill-posedness, and instability \cite{SU3,SU4}. A fundamental structural difficulty arises from its formally determined nature: both the unknown pair $(f, c)$ and the measurement data $u|_{\mathbb{R}_+\times\partial\Omega}$ possess the same cardinality of independent variables (dimensionally equivalent to $n$ in $\mathbb{R}^n$). This stands in sharp contrast to inverse boundary problems with active measurements, where investigators typically apply controlled excitations to generate infinitely many independent measurements. These facts make \textbf{(IP)} mathematically intractable by conventional approaches and explain the scarcity of rigorous results despite its practical importance.

Existing literature primarily addresses simplified scenarios where either the wave speed $c$ is known \cite{AKK,HrK, SU1,SU4} or the initial data $f$ is known \cite{SU4}. The simultaneous recovery of both unknowns has received limited attention and presents substantially greater difficulties. Pioneering work by \cite{LU} established unique determination of the product $c^{-2}f$ under the restrictive assumptions that this quantity is either harmonic or independent of at least one spatial variable, and the wave speed $c$ as well if  it is constant. This was extended by \cite{KM} to the simultaneous recovery of both $c$ and $f$, requiring the non-degeneracy condition
\begin{equation}\label{d1a}
\int_{\mathbb{R}^3} \frac{f}{c^2}  dx \neq 0
\end{equation}
and global harmonicity of $c^{-2}$. 

Recent progress includes the stability analysis by \cite{KT} under the assumption that $f$ depends Lipschitz continuously on $c$, with $c$ sufficiently close to constant, as well as \cite{KU}'s relaxation of harmonicity to local monotonicity conditions with explicit treatment of cases violating \eqref{d1a}. Notably, \cite{KU}'s framework—which imposes no assumptions on $f$ and accommodates general divergence-form elliptic operators—represents the current state-of-the-art for \textbf{(IP)}. However, their local monotonicity condition fundamentally cannot accommodate discontinuous wave speeds, particularly piecewise constant $c$ with unknown inclusion boundaries, which are physically essential in medical imaging applications.

This work bridges a critical theoretical gap by resolving \textbf{(IP)} for general classes of piecewise constant wave speeds with unknown inclusion geometries. We establish both uniqueness and H\"older stability in this physically relevant configuration. While prior research has primarily addressed the one-dimensional case \cite{DFJK,F}, we develop the first comprehensive higher-dimensional theory for discontinuous coefficients. 

Methodologically, we achieve a fundamental breakthrough: whereas existing approaches universally require local energy decay properties, our novel framework eliminates this constraint entirely. Instead, we only require wave speeds close to the background speed with explicit quantitative bounds. This key advancement significantly expands the admissible class of unknown sources. The practical relevance of our results is underscored by extensive computational studies on joint reconstruction in TAT/PAT \cite{DRV,JVA,MA1,MA2,SLPACT}, confirming both the applicability and mathematical novelty of our theoretical advances.

Beyond biomedical imaging, \textbf{(IP)} provides a fundamental paradigm for understanding severely ill-posed nonlinear inverse problems with passive measurements. This framework has emerging applications in diverse fields including brain imaging \cite{DLU}, geomagnetic anomaly detection \cite{DLL1,DLL2}, and quantum state tomography \cite{LLM,LM}, where similar mathematical structures arise from constrained measurement scenarios. Our methodology thus contributes to a broader mathematical foundation for these technologically important domains.

The remainder of the paper is organized as follows. Section~\ref{sect:2} presents our main results: Theorems~\ref{t1}, \ref{t2}, and~\ref{t4} on unique identifiability, and Theorem~\ref{t8} establishing H\"older stability. Section~\ref{sect:3} develops a frequency-domain representation of solutions to \eqref{eq1}, connecting to elliptic equations. Proofs of the uniqueness theorems are distributed across Sections~\ref{sect:4} (Theorem~\ref{t1}), \ref{sect:5} (Theorem~\ref{t2}), and \ref{sect:6} (Theorem~\ref{t4}), while Section~\ref{sect:7} contains the stability proof for Theorem~\ref{t8}. The Appendix addresses the well-posedness of the IVP \eqref{eq1} and the inverse point source problem for elliptic equations, extending classical results to general divergence-form operators with variable coefficients.

\section{Main results}\label{sect:2}

 Before stating our main results, we define the class of admissible parameters \((c,f)\). Fix $f \in H^1(\mathbb{R}^3)$ such that $\textrm{supp}(f) \subset \overline{\Omega}$, $b_0>0$ a constant and  $c \in L^\infty(\mathbb{R}^3)$  a strictly positive piecewise constant function such that $c=b_0$ on $\mathbb{R}^3 \setminus \overline{\Omega}$ and  condition \eqref{ccc} is fulfilled
with some positive constant   $a>0$. From now on, we fix \(R_0 > 0\) such that \(\overline{\Omega} \subset B_{R_0} := \{x \in \mathbb{R}^3 : |x| < R_0\}\).

\begin{Def}\label{d1} 
The pair \((c,f)\) is \emph{admissible} if:
\begin{enumerate}
    \item The non-degeneracy condition \eqref{d1a} holds;
    \item The sound speed coefficient $c$ satisfies the condition $\norm{1-\frac{c^2}{b_0^2}}_{L^\infty(\R^3)}<1$.

\end{enumerate}

\end{Def}

We note that this admissibility condition holds for a broad class of pairs \((c, f)\), including piecewise constant sound speeds \(c\) satisfying the explicit bound
\begin{equation}\label{ad}
\operatorname{ess\,sup} (c) < \sqrt{2}  b_0,
\end{equation}
with initial data \(f\) fulfilling the non-degeneracy condition \eqref{d1a}.

\subsection{Uniqueness results}

Fix $N\in\mathbb N$ and let $x_1,\ldots,x_N\in \Omega$, $r_1,\ldots,r_N\in(0,+\infty)$ be such that
$$B(x_k,r_k):=\{x\in\R^3:\ |x-x_k|<r_k\}\subset \Omega,\quad B(x_k,r_k)\cap B(x_\ell,r_\ell)=\emptyset,\quad k,\ell=1,\ldots,N,\ k\neq\ell.$$
We will consider first piecewise constant wave speed coefficients $c$ having inclusion given by disjoint balls. These coefficients take the form
\bel{c}c=b_0+\sum_{k=1}^N(b_k-b_0)\mathds{1}_{B(x_k,r_k)},\ee
where for any set $A$ of $\R^3$, $\mathds{1}_{A}$ denotes the characteristic function of $A$, and $b_0,\ldots,b_N\in(0,+\infty)$ with
$b_k\neq b_0$, $k=1,\ldots,N$.

   In our first main result  we consider wave speed coefficients of the form \eqref{c} with inclusions given by balls with  similar radius $r_1=\ldots=r_N=r$.

\begin{Thm}\label{t1} 
For $j=1,2$, let $f_j\in H^1(\R^3)$ be non-uniformly  vanishing with supp$(f_j)\subset\overline{\Omega}$  and $c_j\in L^\infty(\R^3)$ be  a piecewise constant wave speed coefficient of the form \eqref{c} with $N=N_j$and, for $k=1,\ldots,N_j$, $b_k=b_k^j$, $x_k=x_k^j$, $r_k=r>0$, such that  $(c_j,f_j)$ is admissible. Let $u_j$ be the solution of \eqref{eq1} with $c=c_j$ and $f=f_j$. Then  the following implication
\begin{equation}\label{t1c}  (u_1= u_2 \textrm{ on }\R_+\times\partial\Omega)\Rightarrow (c_1=c_2 \textrm{ and }f_1=f_2)\end{equation}
holds true.
\end{Thm}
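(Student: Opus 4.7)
The plan is to pass to the frequency domain via the Laplace transform in time, reducing the hyperbolic problem to a family of elliptic equations parameterised by the dual variable $\tau\in\C^+:=\{\tau\in\C:\ \im\tau>0\}$, and then to extract the unknowns from a low-frequency expansion together with the inverse point-source machinery developed in the Appendix.

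Set $\tilde u_j(\tau,\cdot):=\int_0^{+\infty}e^{i\tau t}u_j(t,\cdot)\,dt$ for $j=1,2$ and $\tau\in\C^+$. By the frequency-domain representation of Section~\ref{sect:3}, $\tilde u_j$ solves
\begin{equation*}
(-\Delta-\tau^2 c_j^{-2})\tilde u_j=-i\tau c_j^{-2}f_j\ \text{in}\ \R^3,
\end{equation*}
with decay at infinity (enforced by $\im\tau>0$). The hypothesis $u_1=u_2$ on $\R_+\times\partial\Omega$ lifts to $\tilde u_1=\tilde u_2$ on $\C^+\times\partial\Omega$. Since $c_j\equiv b_0$ and $f_j\equiv 0$ outside $\overline\Omega$, both transforms solve the same free Helmholtz equation there with identical Dirichlet data and decay; uniqueness of the exterior Dirichlet problem then gives $\tilde u_1=\tilde u_2$ throughout $(\R^3\setminus\overline\Omega)\times\C^+$.

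Next I perform a low-frequency expansion near $\tau=0$, writing $\tilde u_j=\sum_{k\geq 0}(-i\tau)(-\tau^2)^k V_j^{(k)}$; the admissibility bound $\|1-c_j^2/b_0^2\|_{L^\infty(\R^3)}<1$ from Definition~\ref{d1} underwrites the validity of this expansion. Substituting into the equation, the leading term $V_j^{(0)}=:V_j$ is the Newtonian potential of the source,
\begin{equation*}
-\Delta V_j=c_j^{-2}f_j\ \text{in}\ \R^3,\qquad V_j(x)\to 0\ \text{as}\ |x|\to\infty,
\end{equation*}
and the higher coefficients satisfy the recursion $V_j^{(k+1)}=(-\Delta)^{-1}(c_j^{-2}V_j^{(k)})$. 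Matching coefficients of $\tau$ in the exterior equality yields
\begin{equation*}
V_1^{(k)}=V_2^{(k)}\ \text{in}\ \R^3\setminus\overline\Omega\quad\text{for every }k\geq 0.
\end{equation*}

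The heart of the proof is the deduction of $c_1=c_2$ from this tower of exterior equalities. The leading relation $V_1=V_2$ outside $\overline\Omega$ states that $c_1^{-2}f_1$ and $c_2^{-2}f_2$ have identical exterior Newtonian potentials, and the non-degeneracy \eqref{d1a} fixes a nonzero monopole moment. The piecewise-constant form \eqref{c} with the \emph{common} radius $r$ is precisely the structural assumption under which the inverse point-source identification of the Appendix applies: the fixed radius acts as a common convolution kernel that can be factored out, reducing the determination of $c_j$ to the problem of recovering a finite configuration of centres $x_k^j$ and weights $b_k^j$ from exterior potential data, while the higher-order equalities $V_1^{(k)}=V_2^{(k)}$ furnish the additional moment constraints needed to disentangle the contribution of the unknown $H^1$ datum $f_j$ from the inclusion geometry. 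This yields $c_1=c_2=:c$. With $c$ known, the residual problem of determining $f$ from $u|_{\R_+\times\partial\Omega}$ is the classical linear inverse source problem for the wave equation with known speed, whose injectivity (cf.\ \cite{SU1,SU4}) delivers $f_1=f_2$. I expect this final inverse-point-source step to be the main obstacle: decoupling the ball-inclusion geometry encoded in $c_j$ from the distributed contribution of $f_j$ via the low-frequency tower is delicate, and the common-radius assumption combined with the admissibility bound are precisely what make this decoupling tractable.
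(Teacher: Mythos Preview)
Your overall architecture---Laplace transform, exterior uniqueness, low-frequency expansion, point-source identification, then recovery of $f$---matches the paper, but there is a genuine gap exactly where you flag the difficulty, and the mechanism you propose does not work. The ansatz $\tilde u_j=\sum_{k\geq 0}(-i\tau)(-\tau^2)^kV_j^{(k)}$ contains only \emph{odd} powers of $\tau$, and the recursion $V_j^{(k+1)}=(-\Delta)^{-1}(c_j^{-2}V_j^{(k)})$ on $\R^3$ is already ill-defined at $k=0$: since $c_j^{-2}=b_0^{-2}$ outside $\overline\Omega$, the source $c_j^{-2}V_j^{(0)}\sim b_0^{-2}|x|^{-1}$ at infinity and the Newtonian potential diverges. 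The correct picture (Lemma~\ref{l1}, obtained from the outgoing kernel $e^{i\tau|x|}/(4\pi|x|)$ and the cutoff resolvent, not a global Newtonian recursion) is that the local expansion $\hat u_j(p,\cdot)|_{B_{R_0}}=\sum_{k\geq 0}u_j^{(k)}p^k$ has nontrivial \emph{even} terms: $u_j^{(0)}\equiv 0$ but $u_j^{(2)}$ is the nonzero constant $-\frac{1}{2\pi b_0}\int c_j^{-2}f_j$. Exterior equality at order $p^2$ forces these constants to agree ($=:-B$), and then at order $p^4$ the difference $u^{(4)}:=u_1^{(4)}-u_2^{(4)}$ solves $-\Delta u^{(4)}=B(c_1^{-2}-c_2^{-2})$ in $B_{R_0}$ with $u^{(4)}=0$ on $B_{R_0}\setminus\overline\Omega$ (Lemma~\ref{l2}). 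Since $B\neq 0$ by \eqref{d1a}, the sources $f_j$ have dropped out completely at this single step; no ``tower of moment constraints'' is needed or used to disentangle $c$ from $f$.

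The role of the common radius $r$ is also different from what you describe. It is not a convolution kernel to be factored out; rather, testing $-\Delta u^{(4)}=B(c_1^{-2}-c_2^{-2})$ against an arbitrary harmonic $\phi$ gives $\int_\Omega(c_1^{-2}-c_2^{-2})\phi\,dx=0$, and the \emph{mean value property} on each ball $B(x_k^j,r)$ converts this into $\sum_{k=1}^{N_1}\lambda_k^1\phi(x_k^1)=\sum_{k=1}^{N_2}\lambda_k^2\phi(x_k^2)$ with $\lambda_k^j=((b_k^j)^{-2}-b_0^{-2})|B_r|$, which is exactly the input to the point-source Theorem~\ref{t3}. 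Equal radii are what allow the matched weights $\lambda_k^j$ to determine the $b_k^j$. Finally, once $c_1=c_2$, the paper recovers $f_1=f_2$ via \cite[Theorem~8.1]{KU} rather than the active-measurement results \cite{SU1,SU4}.
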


Assuming that the center $x_1,\ldots,x_N$ of the balls in \eqref{c} are related by a  partial order binary relation, we can remove the condition $r_1=\ldots=r_N$ and extend the result of Theorem \ref{t1} as follows.

\begin{cor}\label{cc1} 
For $j=1,2$, let $f_j\in H^1(\R^3)$ be non-uniformly  vanishing with supp$(f_j)\subset\overline{\Omega}$  and $c_j\in L^\infty(\R^3)$ be  a piecewise constant wave speed coefficient of the form \eqref{c} with $N=N_j$and, for $k=1,\ldots,N_j$, $b_k=b_k^j$, $x_k=x_k^j$, $r_k=r_k^j$, such that   $(c_j,f_j)$ is admissible. Let $u_j$ be the solution of \eqref{eq1} with $c=c_j$ and $f=f_j$. We assume that for $N=\min(N_1,N_2)$ we have
\begin{equation}\label{t1a}  \min(|b_k^1-b_k^2|, |r_k^1-r_k^2|)=0,\quad k=1,\ldots,N\end{equation}
and, for $N\geq2$,  there exists a partial order binary relation $\mathcal R$ on the set $\{x_1^1,\ldots,x_{N_1}^1\}\cup\{x_1^2,\ldots,x_{N_2}^2\} $ such that
\begin{equation}\label{t1b}  x_k^j\mathcal R x_{k+1}^j,\quad j=1,2,\ k=1,\ldots, N_j-1.\end{equation}
Then  the implication \eqref{t1c} holds true.
\end{cor}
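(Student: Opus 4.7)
The plan is to use Theorem~\ref{t1} as a building block and reduce the non-uniform radius situation to it via an induction driven by the partial order $\mathcal{R}$.

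As in Theorem~\ref{t1}, it is enough to show that $c_1 = c_2$; the equality $f_1 = f_2$ will then follow from the inverse source step at the end of that proof, once the common wave speed is known. I would first translate the boundary identity $u_1 = u_2$ on $\R_+ \times \partial\Omega$ into frequency-domain identities via the representation of Section~\ref{sect:3}, exactly as in the proof of Theorem~\ref{t1}, so that the piecewise constant structure of $c_1$ and $c_2$ translates into explicit spectral data depending on the triples $(x_k^j, b_k^j, r_k^j)$.

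The core argument is an induction on $N = \min(N_1, N_2)$. For $N = 1$, condition \eqref{t1a} at $k = 1$ combined with the single-inclusion analysis already used in Theorem~\ref{t1} identifies both the ball and its coefficient: if the radii match, we are directly in the setting of Theorem~\ref{t1}; if instead the values match, the geometric singularities of the frequency-domain representation of Section~\ref{sect:3} pin down the location and radius of the inclusion. For the inductive step, I would use \eqref{t1b} to select an $\mathcal{R}$-maximal center among $\{x_k^j\}_{k \leq N}$ in each chain. The key point is that, because the two chains are embedded in a common partial order, their $\mathcal{R}$-maximal balls can be isolated simultaneously from the boundary data; condition \eqref{t1a} at that index, together with the Theorem~\ref{t1} argument applied to the extracted pair, forces the two balls to coincide. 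Stripping off this common ball from both $c_1$ and $c_2$ reduces the number of inclusions by one on each side, and the induction continues.

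The main obstacle lies in the subcase of \eqref{t1a} in which the common datum is the value $b_k^1 = b_k^2$ while $r_k^1 \neq r_k^2$, since Theorem~\ref{t1} cannot be invoked verbatim there. In that situation I would need to show that any radius mismatch produces a detectable signature at the interfacial jump in the low-frequency representation, and this is exactly where hypothesis \eqref{t1b} plays its role: the partial-order structure rules out configurations in which the contribution of the $\mathcal{R}$-maximal ball could be masked or mimicked by the contributions of the other balls in the chain, so the maximal pair can always be cleanly isolated. Once radii and values are matched for that ball, the problem for the remaining $N-1$ inclusions falls into the same framework and the induction closes, yielding $c_1 = c_2$ and hence $f_1 = f_2$.
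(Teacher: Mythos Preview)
Your proposal misses the actual mechanism of the proof and replaces it with an inductive peeling argument that is both unnecessary and not clearly workable. The paper does \emph{not} argue by induction on $N$ or by isolating an $\mathcal R$-maximal ball. Instead, it reruns verbatim the proof of Theorem~\ref{t1} up to the point where the point-source inverse problem (Theorem~\ref{t3}) yields $N_1=N_2=N$ together with a permutation $\sigma$ of $\{1,\dots,N\}$ such that
\[
x_k^1=x_{\sigma(k)}^2,\qquad \lambda_k^1=\lambda_{\sigma(k)}^2,\qquad \lambda_k^j:=\bigl((b_k^j)^{-2}-b_0^{-2}\bigr)\,|B_{r_k^j}|.
\]
The partial order \eqref{t1b} is then used for one purpose only: to force $\sigma=\mathrm{id}$. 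Indeed, if $\sigma\neq\mathrm{id}$ one finds $k_1<k_2$ with $\sigma(k_2)<\sigma(k_1)$, and the chain conditions in \eqref{t1b} give both $x_{k_1}^1\,\mathcal R\,x_{k_2}^1$ and $x_{k_2}^1\,\mathcal R\,x_{k_1}^1$, hence $x_{k_1}^1=x_{k_2}^1$ by antisymmetry, contradicting disjointness of the balls.

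Once $\sigma=\mathrm{id}$, the ``obstacle'' you identify dissolves algebraically, with no appeal to interfacial jump signatures. From $\lambda_k^1=\lambda_k^2$ and assumption \eqref{t1a} (either $b_k^1=b_k^2$ or $r_k^1=r_k^2$), one immediately gets \emph{both} $b_k^1=b_k^2$ and $r_k^1=r_k^2$: if the $b$'s agree then $|B_{r_k^1}|=|B_{r_k^2}|$, and if the $r$'s agree then $(b_k^1)^{-2}=(b_k^2)^{-2}$. Your inductive stripping scheme never reaches this identity because you do not extract the $\lambda_k^1=\lambda_{\sigma(k)}^2$ information from the Theorem~\ref{t1} machinery, and your proposed detection of radius mismatch via ``low-frequency interfacial signatures'' is not something the available tools provide.
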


To our knowledge, Theorem \ref{t1} and Corollary \ref{cc1} provide the first rigorous mathematical framework establishing simultaneous determination of both initial data and piecewise constant wave speeds \(c\) over ball inclusions with unknown locations. The only geometric requirements are that the balls in \eqref{c} share identical radii or their centers satisfy a partial order relation. While technical, we argue these conditions reflect fundamental obstructions inherent to (\textbf{IP}). Crucially, they represent significant progress beyond the local monotonicity requirement in \cite{KU} and the harmonicity condition on \(c^{-2}\) in \cite{KM}. 

Our initial data assumptions are minimal—only condition \eqref{d1a} (considered in \cite{KU,KM})—which holds broadly, particularly when \(f\) maintains constant sign and is non-vanishing. Moreover, we achieve a critical theoretical advance: unlike most TAT/PAT literature \cite{AKK,KU,KM,LU,SU1,SU4}, our results require no time-decay properties for solutions of \eqref{eq1}. This constitutes a major breakthrough, as classical decay results for smooth coefficients \cite{Va,Vo} remain unestablished for piecewise constant sound speeds, with only weaker logarithmic decay available \cite[Theorem 2]{Sh}.

We can extend the results of Theorem \ref{t1} and Corollary \ref{cc1} to the determination of some class of more general  piecewise constant wave speed coefficients $c$ of the form
\bel{c1}c=b_0+\sum_{k=1}^N(b_k-b_0)\mathds{1}_{\omega_k},\ee
with $\omega_k$ an open subset of $\Omega$ given by union of balls with potential holes taking  the form
$$\omega_k=\left(\bigcup_{p=1}^{m_k}B(x_{p,k},r_{p,k})\right)\setminus \left(\bigcup_{p=1}^{n_k}\overline{B(y_{p,k},s_{p,k})}\right).$$

For  $N=1$,  let $c$ take the form
\bel{c2}c=b_0+(b_1-b_0)\mathds{1}_{\omega},\ee
with $\omega$ an open subset of $\Omega$ of the form
\bel{om1}\omega=\left(\bigcup_{p=1}^{m}B(x_{p},r_{p})\right)\setminus \left(\bigcup_{p=1}^{n}\overline{B(y_{p},s_{p})}\right).\ee
Here $x_1,\ldots,x_m,y_1,\ldots,y_n$, are $m+n$ distinct points of $\Omega$ and the following conditions
\bel{cond1}B(x_p,r_p)\cap B(x_q,r_q)=\emptyset,\quad p,q=1,\ldots,m,\ p\neq q,\ee
\bel{cond2}B(y_p,s_p)\cap B(y_q,s_q)=\emptyset,\quad p,q=1,\ldots,n,\ p\neq q,\ee
\bel{cond3}\forall k\in\{1,\ldots,n\},\ \exists \ell\in\{1,\ldots,m\},\ \overline{B(y_k,s_k)}\subset B(x_\ell,r_\ell).\ee
are fulfilled. For this class of wave speed coefficients,  we can prove the following result.

\begin{Thm}\label{t2} 
For $j=1,2$, let $f_j\in H^1(\R^3)$ be non-uniformly  vanishing with supp$(f_j)\subset\overline{\Omega}$ and $c_j\in L^\infty(\R^3)$ be  a piecewise constant wave speed coefficient of the form \eqref{c2} where $\omega=\omega^j$ takes the form \eqref{om1} with $n=n_j$, $m=m_j$,  $x_p=x_p^j$, $r_p=r_p^j$, $p=1,\ldots,m_j$, $y_q=y_q^j$, $s_q=s_p^j$, $q=1,\ldots,n_j$,  such that \eqref{cond1}-\eqref{cond3} are fulfilled, supp$(f_j)\subset\overline{\Omega}$ and  $(c_j,f_j)$ is admissible. Let $u_j$ be the solution of \eqref{eq1} with $c=c_j$ and $f=f_j$.
Then the implication \eqref{t1c} holds true.
\end{Thm}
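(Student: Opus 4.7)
The plan is to follow the strategy used for Theorem \ref{t1}, combined with a hierarchical peeling argument that exploits the nested structure encoded in \eqref{cond1}--\eqref{cond3}. First, following Section \ref{sect:3}, I apply the Laplace transform in $t$ to convert \eqref{eq1} into an inverse problem for a family of divergence-form elliptic equations, parametrised by the complex frequency $\lambda$, in which the jump of $c_j^2$ plays the role of a piecewise constant coefficient. Since $c_1 = c_2 = b_0$ outside $\overline{\Omega}$ and the identity $u_1 = u_2$ on $\R_+ \times \p\Omega$ survives the Laplace transform, solving the exterior Helmholtz problem with outgoing radiation condition yields identity of the Cauchy data of $\widehat{u}_j(\lambda,\cdot)$ on $\p\Omega$ for all admissible $\lambda$. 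The admissibility bound $\norm{1-c_j^2/b_0^2}_{L^\infty(\R^3)}<1$ ensures absolute convergence of a Neumann-type expansion around $\lambda=0$, while \eqref{d1a} makes the leading order contribution as $\lambda\to 0^+$ nontrivial. Combining this with the inverse point source result from the Appendix, the problem reduces to identifying $b_1$ and the set $\omega^j$ from overdetermined elliptic data on $\p\Omega$.

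Next, I peel the inclusions from the outside inward. By \eqref{cond1} the outer balls $B(x_p^j, r_p^j)$ are pairwise disjoint, and by \eqref{cond3} each hole $\overline{B(y_q^j, s_q^j)}$ lies inside a single outer ball. Hence, in a neighborhood of each outer sphere $\p B(x_p^j, r_p^j)$, the coefficient $c_j^2$ coincides with $b_0^2+(b_1^2-b_0^2)\mathds{1}_{\bigcup_p B(x_p^j,r_p^j)}$, so the jump at $\p B(x_p^j, r_p^j)$ is undisturbed by the interior holes. This allows me to apply the argument of Theorem \ref{t1} to identify $N_1 = N_2$, the family of outer balls $\{B(x_p^1, r_p^1)\} = \{B(x_p^2, r_p^2)\}$, and the common value $b_1^1 = b_1^2$. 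Once the outer balls coincide, I use unique continuation for the low-frequency elliptic operator across the regular region $\Omega \setminus \bigcup_p \overline{B(x_p^j, r_p^j)}$ to transport the Cauchy data of $\widehat{u}_j$ from $\p\Omega$ up to each $\p B(x_p^j, r_p^j)$. Inside each outer ball, the problem then reduces to the setting of Theorem \ref{t1} with the roles of $b_0$ and $b_1$ interchanged (the holes are ball inclusions with jump $b_0-b_1$ against a $b_1$ background), and by \eqref{cond2} these candidate holes are pairwise disjoint; a further application of Theorem \ref{t1} therefore identifies them. Having established $\omega^1 = \omega^2$ and $c_1 = c_2$, the equality $f_1 = f_2$ follows from the Appendix's inverse source result with a now-known coefficient.

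The main obstacle I anticipate is the rigorous justification of the inward transfer of boundary data across each spherical interface, together with the preservation of the non-degeneracy condition \eqref{d1a} after restricting attention to the interior of each outer ball. Unique continuation for the low-frequency elliptic equation is standard in regions where $c_j$ is smooth, but must be applied up to, not across, the jump surfaces, which is the reason the frequency-domain reformulation is crucial. Moreover, one must verify that the restricted low-frequency elliptic solution obtained inside each $B(x_p^j, r_p^j)$ is not identically zero there, so that the hole-identification step of Theorem \ref{t1} is genuinely applicable; this should follow from its harmonic behavior in the hole-free portion of the ball combined with \eqref{d1a} and a maximum principle argument.
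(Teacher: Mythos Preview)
Your hierarchical peeling strategy is \emph{not} what the paper does, and it also has a gap at the first peeling step.

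The paper's proof is a single--step argument and never transports Cauchy data across the outer spheres. The key observation, which you miss, is that conditions \eqref{cond1}--\eqref{cond3} give the algebraic identity
\[
\mathds{1}_{\omega^j}=\sum_{p=1}^{m_j}\mathds{1}_{B(x_p^j,r_p^j)}-\sum_{q=1}^{n_j}\mathds{1}_{B(y_q^j,s_q^j)}.
\]
Plugging this into the orthogonality relation \eqref{l2c} from Lemma \ref{l2} and applying the mean value theorem to \emph{every} ball at once (outer balls and holes alike) produces a single point--source identity: outer centres $x_p^j$ enter with \emph{positive} weights $|B_{r_p^j}|$, hole centres $y_q^j$ with \emph{negative} weights $-|B_{s_q^j}|$. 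Theorem \ref{t3} then recovers all centres and weights simultaneously, and the \emph{sign} of each weight tells you whether the point is an outer centre or a hole centre, while the magnitude recovers the radius. No peeling, no interior unique continuation, no restricted non--degeneracy issue.

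Your approach has a genuine gap at the sentence ``This allows me to apply the argument of Theorem \ref{t1} to identify \ldots\ the family of outer balls''. The argument of Theorem \ref{t1} is the global orthogonality \eqref{l2c} plus the mean value theorem; with holes present this relation already contains the hole contributions, so it does \emph{not} isolate the outer balls. Your local remark that the jump of $c_j^2$ across $\partial B(x_p^j,r_p^j)$ is undisturbed by the holes is correct, but it is a statement about transmission data on that sphere, and you never connect it to any usable identity---certainly not to the proof of Theorem \ref{t1}, which does not use jumps at all. A jump/unique--continuation route could perhaps be made to work, but it would require an entirely different argument (closer to shape identification for transmission problems), and you would still have to resolve the two obstacles you flag yourself.

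One further misreading: in Theorem \ref{t2} the value $b_1$ is common to $c_1$ and $c_2$ by hypothesis (only $\omega$ carries the superscript $j$), so there is nothing to prove about ``$b_1^1=b_1^2$''. This is precisely what lets the paper divide out the factor $(b_1^{-2}-b_0^{-2})$ and reduce to weights $\pm|B_r|$ whose signs separate outer balls from holes.
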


We can also consider wave speed coefficients of the form \eqref{c1} with 
\bel{om2}\omega_k=B(x_{k},r)\setminus \overline{B(y_{k},s)},\quad \overline{B(y_{k},s)}\subset B(x_k,r),\ k=1,\ldots,N.\ee
with $r>s>0$.
\begin{Thm}\label{t4} 
For $j=1,2$, let $f_j\in H^1(\R^3)$ be non-uniformly  vanishing, with supp$(f_j)\subset\overline{\Omega}$, and $c_j\in L^\infty(\R^3)$ be  a piecewise constant wave speed coefficient of the form \eqref{c1} with $N=N_j$, $\omega_k=\omega^j_k$ of the form \eqref{om1} with $x_k=x_k^j$, $b_k=b_k^j$,  $y_k=y_k^j$. Assume also that $(c_j,f_j)$ is admissible.  Then  the  implication \eqref{t1c} holds true.
\end{Thm}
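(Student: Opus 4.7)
The proof will follow the same overall architecture as Theorems \ref{t1} and \ref{t2}, adapted to the composite inclusion structure $\omega_k^j = B(x_k^j, r) \setminus \overline{B(y_k^j, s)}$, in which the outer radius $r$ and the inner radius $s$ are common to all inclusions and to both pairs $(c_1, f_1)$ and $(c_2, f_2)$. I would begin by transferring the boundary identity $u_1 = u_2$ on $\R_+ \times \partial\Omega$ into the frequency domain via the representation developed in Section \ref{sect:3}. This yields equality of the Laplace-type transforms $\hat u_1(\lambda, \cdot)$ and $\hat u_2(\lambda, \cdot)$ on $\partial\Omega$ for $\lambda$ in a suitable range, and hence equality of the Cauchy data on $\partial\Omega$ for the associated elliptic problems $(-\Delta + \lambda^2 c_j^{-2}) \hat u_j = \lambda c_j^{-2} f_j$. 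Passing to the low-frequency limit $\lambda \to 0^+$ and invoking the non-degeneracy condition \eqref{d1a}, I would reduce the analysis to the comparison of two elliptic solutions with identical Cauchy data on $\partial\Omega$, to which the inverse point source result from the Appendix applies.

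Next I would recover the outer balls $B(x_k^j, r)$. Because the outer radius is common to both $c_1$ and $c_2$, the extremal-component argument of Theorem \ref{t1} can be repeated: one selects an extremal point of $\mathrm{supp}(c_1 - b_0) \cup \mathrm{supp}(c_2 - b_0)$ inside $\overline{\Omega}$, applies elliptic unique continuation in the unbounded component of the complement of the corresponding ball, and uses equality of the radii to match an outer ball of $c_1$ with one of $c_2$. Iterating and peeling off matched balls in turn yields $N_1 = N_2$ and, up to relabeling, $B(x_k^1, r) = B(x_k^2, r)$ for every $k$. The transmission conditions for the low-frequency elliptic solutions across each common outer sphere $\partial B(x_k^1, r)$ then force $b_k^1 = b_k^2$.

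Once the outer balls and their $b_k$ values are matched, the remaining coefficient difference inside each $B(x_k^1, r)$ is supported on the symmetric difference $B(y_k^1, s) \triangle B(y_k^2, s)$. Restricting the low-frequency elliptic problem to the interior of each outer ball, now with the recovered constant value $b_k^1$ playing the role of the background, I would set up a local reconstruction question for a single spherical inclusion of the common inner radius $s$. The same extremal/unique-continuation reasoning then pins down $y_k^1 = y_k^2$, establishing $c_1 = c_2$ throughout $\R^3$. With the wave speed thus recovered, $f_1 = f_2$ follows from the inverse source theory for the wave equation with known sound speed, as in the concluding step of Theorems \ref{t1} and \ref{t2}.

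The principal obstacle I anticipate is the inner step: propagating the elliptic comparison through the jump at $\partial B(x_k^j, r)$ into the interior, where the source term $c_j^{-2} f_j$ may itself be nontrivial and where one must reset the effective background to the now-known value $b_k^j$ rather than $b_0$. Making the inner sub-problem well-posed, verifying that a suitable non-degeneracy and admissibility condition survive at this inner scale, and re-applying the common-radius extremal argument inside each outer ball constitute the main technical work of the proof.
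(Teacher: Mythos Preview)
Your proposal mischaracterizes the architecture of Theorems~\ref{t1} and~\ref{t2} and therefore heads in a direction the paper never takes. Those proofs do not use any ``extremal-component / peeling'' argument, transmission conditions, or layer-by-layer unique continuation. Instead, the single key device---which you do not mention---is the \emph{mean value property for harmonic functions}: from Lemma~\ref{l2} one has
\[
\int_\Omega (c_1^{-2}-c_2^{-2})\,\phi\,dx=0
\]
for every harmonic $\phi\in H^2(B_{R_0})$, and since each $\omega_k^j=B(x_k^j,r)\setminus\overline{B(y_k^j,s)}$, the integral $\int_{\omega_k^j}\phi$ collapses via the mean value theorem to $|B_r|\,\phi(x_k^j)-|B_s|\,\phi(y_k^j)$. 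This converts the identity into a finite linear relation among point evaluations of harmonic functions, which is then recast (exactly as in the proof of Theorem~\ref{t1}) as equality of Neumann data for two auxiliary point-source problems on $B_{R_0}$, and Theorem~\ref{t3} from the Appendix yields a bijection between the full collections $\{x_k^1,y_k^1\}$ and $\{x_k^2,y_k^2\}$ together with equality of weights. The common radii $r>s$ then allow one to separate outer centers from inner centers by the sign/magnitude structure of the weights, and to match the $b_k$ values, all in one step.

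The route you sketch---first matching outer balls by an extremal argument, then reading off $b_k$ from transmission conditions, then restarting inside each outer ball with a new background $b_k$---is not only different but genuinely problematic. The relevant elliptic object $u^{(4)}$ satisfies $-\Delta u^{(4)}=B(c_1^{-2}-c_2^{-2})$ with $B$ a scalar; there is no natural transmission problem across $\partial B(x_k^j,r)$ from which $b_k^1=b_k^2$ would drop out, and your ``inner step'' (resetting the background and recovering a non-degeneracy condition inside each ball) is exactly the obstacle you yourself flag and leave unresolved. In short, the mean value identity is the missing idea; once you use it, the whole proof becomes a direct application of the point-source uniqueness Theorem~\ref{t3}, with no multi-scale peeling needed.
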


Theorems \ref{t2} and \ref{t4} extend Theorem \ref{t1} to more complex inclusions comprising unions of balls with potential holes. To our knowledge, Theorem \ref{t2} establishes the first uniqueness result for determining a piecewise constant wave speed $c$ over inclusions consisting of unions of balls with holes—a significant geometric generalization.

The proofs of Theorems \ref{t1}, \ref{t2}, and \ref{t4} employ a frequency-domain representation of solutions to \eqref{eq1} (developed in Lemmas \ref{l1} and \ref{l2}), which transforms the initial value problem into families of elliptic equations. 

Our approach fundamentally advances beyond \cite{KU}, who established similar properties under restrictive exponential decay assumptions for the local energy—a condition currently known only for smooth coefficients satisfying non-trapping conditions \cite{Va,Vo} and unavailable for piecewise constant $c$. Instead, we develop a novel low-frequency representation derived through analytic extension of the cutoff resolvent for the elliptic operator $L = -c^{-2}\Delta$ to a fixed neighborhood of zero in the complex plane. Crucially, this construction operates under the explicit quantitative bound \eqref{ad}, enabling us to establish in Lemmas \ref{l1} and \ref{l2} that all key results of \cite{KU} extend to piecewise constant coefficients satisfying \eqref{ad}. This breakthrough provides both a generalizable framework and verifiable conditions for practical applications.

Combining this frequency-domain representation with harmonic analysis techniques, we reduce determining $c$ to solving point source inverse problems for elliptic equations. These are resolved via novel arguments presented in the Appendix for general divergence-form elliptic operators.

Collectively, Theorems \ref{t1}, \ref{t2}, and \ref{t4} provide the most complete resolution to date of the TAT/PAT inverse problem (\textbf{IP}) for physically relevant piecewise constant wave speeds. Beyond biomedical imaging, these results advance the theoretical understanding of nonlinear inverse problems with passive measurements as discussed in Section~\ref{sect:1}. Our work thus represents a foundational step toward solving this challenging class of problems for piecewise constant coefficients.

\subsection{Stability  results}

In this section we assume that $\Omega$ is  a $C^2$ bounded and connected domain of $\R^3$, $\nu$ is the outward unit normal vector to $\partial\Omega$ while $\partial_\nu$ is the normal derivative. We assume also that $c$ takes the form \eqref{c}, $f\in H^1(\R^3)$ satisfies supp$(f)\subset\Omega$ and the pair $(c,f)$ is admissible. In light of \cite[Proposition 8]{HK},  the Laplace transform  in time $\hat{u}$ of of the solution $u$ of \eqref{eq1} is well defined by 
$$\hat{u}(p,\cdot)=\int_0^{+\infty}e^{-pt}u(t,\cdot),\quad p\in\mathbb C_+:=\{z\in\mathbb C:\re z>0\}.$$
Let us also observe that, in view of Lemma \ref{l11} in the Appendix, for $f \in H^2(\mathbb{R}^3)$  the
 problem \eqref{eq1} admits a unique solution $u\in  C^2([0,+\infty);L^2(\R^3))\cap C^1([0,+\infty);H^1(\R^3))\cap C([0,+\infty);H^2(\R^3))$. 
We prove in Lemma \ref{l111} that the map $p\mapsto\hat{u}(p,\cdot)|_{B_{R_0}}$ can be extended analytically on a neighborhood of zero in the complex plane as a map taking values in  $H^2(B_{R_0})$. Then we can consider the boundary data $\partial_p^{k} \partial_\nu^\ell\hat{u}(p,x)|_{p=0}$, $x\in\partial\Omega$, $k\in\mathbb N$, $\ell=0,1$. By using such data, for sound speed coefficients of the form \eqref{c} with $N=1$, we can extend the uniqueness results of the preceding section into the following stability result.

\begin{Thm}\label{t8} 
For $j=1,2$, let $f_j\in H^3(\R^3)$ be non-uniformly  vanishing with supp$(f_j)\subset\Omega$, $\Omega$ be $C^2$ bounded and connected domain of $\R^3$  and $c_j\in L^\infty(\R^3)$ be  a piecewise constant wave speed coefficient of the form \eqref{c} with $N=1$ , $b_1=b_1^j$, $x_1=x_1^j$. Assume also that $b_1^1<b_0$ and $(c_j,f_j)$ is admissible. Let $u_j$ be the solution of \eqref{eq1} with $c=c_j$ and $f=f_j$. We fix also $M\geq \max(b_1^1,b_2^2)+\max(\norm{f_1}_{H^3(\R^3)},\norm{f_2}_{H^3(\R^3)} )$, $0<m\leq \min(b_1^1,b_2^2,b_0,r_1)$, $T>4R_0\frac{b_0}{m}$ and $1\leq q<\infty$. There exists a constant $C>0$ depending on $R_0$,  $q$, $m$ and $M$ such that
\bel{esti14} \norm{c_1-c_2}_{L^{q}(B_{R_0})}\leq C\left(\sum_{k=1}^{2}\sum_{\ell=0}^1\norm{\partial_p^{2k} \partial_\nu^\ell\widehat{u_1}(p,\cdot)|_{p=0}-\partial_p^{k} \partial_\nu^\ell\widehat{u_2}(p,\cdot)|_{p=0}}_{L^2(\partial \Omega)}\right)^{\frac 1 q}.\ee
In addition, for all $s\in(0,\frac 1 2)$, there exists a constant $C>0$ depending on $s$, $b_0$ $R_0$, $\Omega$ $T$, $m$ and $M$ such that
\bel{t8a}\begin{aligned}\norm{f_1-f_2}_{H^1(B_{R_0})}&\leq C\left(\|  u_1 -   u_2\|_{H^{\frac{3}{2}}((0, T)\times\partial \Omega  )}+\|  t^{-\frac{1}{2}}(\partial_t u_1 -   \partial_tu_2)\|_{L^2((0, T)\times\partial \Omega  )}\right)\\
&\ \ \ \ +C\norm{\partial_\nu u_1-\partial_\nu u_2}_{L^2((0,T)\times\partial \Omega )}\\
&\ \ \ \ +C\left(\sum_{k=1}^{2}\sum_{\ell=0}^1\norm{\partial_p^{2k} \partial_\nu^\ell\widehat{u_1}(p,\cdot)|_{p=0}-\partial_p^{k} \partial_\nu^\ell\widehat{u_2}(p,\cdot)|_{p=0}}_{L^2(\partial \Omega)}\right)^{\frac{3+2s}{6}}.\end{aligned}\ee
\end{Thm}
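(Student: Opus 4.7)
The plan is to split the theorem into two stability steps: first establish the Hölder estimate \eqref{esti14} for the piecewise constant wave speed using only the low-frequency boundary data, and then combine \eqref{esti14} with the full time-domain boundary observation on $(0,T)\times\partial\Omega$ to deduce \eqref{t8a} for the initial source. The frequency-domain framework from Lemmas \ref{l1}, \ref{l2} and \ref{l111} is central: it legitimises a Taylor expansion $\widehat u_j(p,\cdot)=\sum_{k\geq 0}p^k V_k^{(j)}/k!$ on $B_{R_0}$ in a complex neighbourhood of $p=0$, with coefficients $V_k^{(j)}=\partial_p^k\widehat u_j(0,\cdot)\in H^2(B_{R_0})$ solving a cascade of Poisson-type equations whose sources involve $c_j^{-2}f_j$ and the lower-order $V_m^{(j)}$, together with matching and growth conditions inherited from the far-field behaviour of $\widehat u_j(p,\cdot)$.

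For \eqref{esti14} I would test the equations satisfied by the differences $V_k^{(1)}-V_k^{(2)}$, $k=2,4$, against harmonic polynomials in $\Omega$ and apply Green's identity. This converts the Dirichlet and Neumann traces of $V_2^{(1)}-V_2^{(2)}$ and $V_4^{(1)}-V_4^{(2)}$ (which are precisely the four quantities on the right of \eqref{esti14}, up to combinatorial factors) into moment identities of the form
\[
\int_\Omega (c_1^{-2}-c_2^{-2})F\varphi\,dx=\text{(boundary integrals of the }V_k^{(j)}\text{ and their normal derivatives)},
\]
where $F$ is built from $f_j$ and the lower-order $V_m^{(j)}$ and $\varphi$ ranges over polynomial harmonics. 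Since $N=1$, the difference $c_1^{-2}-c_2^{-2}$ is piecewise constant, supported on $B(x_1^1,r_1^1)\cup B(x_1^2,r_1^2)$, and described by only five real parameters; evaluating the moment identities on constants, coordinate monomials and quadratic harmonics produces enough independent scalar equations to invoke the inverse point-source stability developed in the Appendix for divergence-form operators, with the sign condition $b_1^1<b_0$ selecting the correct branch of the inversion. This yields quantitative bounds on $|b_1^1-b_1^2|$, $|x_1^1-x_1^2|$ and $|r_1^1-r_1^2|$, and translating this parameter stability into the $L^q(B_{R_0})$-norm of a piecewise constant function naturally produces the exponent $1/q$ of \eqref{esti14}.

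For \eqref{t8a} I set $w=u_1-u_2$, which solves
\[
c_1^{-2}\partial_t^2 w-\Delta w=(c_2^{-2}-c_1^{-2})\partial_t^2 u_2,\quad w(0,\cdot)=f_1-f_2,\quad \partial_t w(0,\cdot)=0,
\]
and whose Dirichlet and Neumann traces on $(0,T)\times\partial\Omega$ are controlled by the first three norms in \eqref{t8a}. Because $T>4R_0 b_0/m$ exceeds twice the round-trip time of $B_{R_0}$ at the slowest admissible speed, a boundary observability/Lipschitz stability estimate for the wave equation with admissible piecewise constant coefficient (the bound $\|1-c^2/b_0^2\|_{L^\infty}<1$ rules out trapping across the jump interface) gives
\[
\|f_1-f_2\|_{H^1(B_{R_0})}\leq C(\text{hyperbolic boundary norms})+C\|(c_1^{-2}-c_2^{-2})\partial_t^2 u_2\|_{L^2((0,T)\times\R^3)}.
\]
Using $f_j\in H^3$ and standard energy estimates, $\partial_t^2 u_2$ is bounded in $L^\infty_tL^\infty_x$ by a constant $C(M)$, so Hölder's inequality combined with \eqref{esti14} dominates the source term by a fractional power of the elliptic boundary norm; interpolating between $H^1(B_{R_0})$ and the a priori $H^3$-bound on $f_1-f_2$ then produces the exponent $(3+2s)/6$, $s\in(0,1/2)$, and completes \eqref{t8a}. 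The main obstacle will be the coefficient step: showing that only four boundary traces stably identify the five-parameter inclusion requires a uniform non-degeneracy of the associated moment map and a quantitative version of the Appendix's inverse point-source result with constants depending only on $m$, $M$ and $R_0$; a secondary difficulty is justifying the wave-equation observability across the unknown Lipschitz jump interface rather than invoking a classical smooth-coefficient result.
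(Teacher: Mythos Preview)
Your overall two--step architecture (recover $c$ from the Taylor coefficients of $\widehat u_j$ at $p=0$, then recover $f$ via wave observability) matches the paper, but several of the internal mechanisms are off. First a minor point: in the statement only $b_1^j$ and $x_1^j$ vary with $j$; the radius $r_1$ is fixed, so there are four parameters, not five, and no $|r_1^1-r_1^2|$ term arises. For the coefficient step the paper does not appeal to any quantitative point--source stability from the Appendix. After the Green identity and the mean value property it obtains $|(b_1^1)^{-2}v(x_1^1)-(b_1^2)^{-2}v(x_1^2)|\le C(\text{data})\|v\|_{H^2(\Omega)}$ for every harmonic $v$, and then simply evaluates at two explicit families: the linear functions $v(x)=x_\ell-(x_1^1)_\ell$ give $|x_1^1-x_1^2|$, and an exponential harmonic $v(x)=\exp((x-x_1^1)\cdot\omega_1+i(x-x_1^1)\cdot\omega_2)$ with $\omega_1=(x_1^1-x_1^2)/|x_1^1-x_1^2|$ gives $|b_1^1-b_1^2|$. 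The condition $b_1^1<b_0$ plays no role here; it is used only for the observability inequality (Proposition~\ref{p4}, taken from \cite{KT}), which is exactly the ``secondary difficulty'' you anticipated.

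The source step has two genuine gaps. The claim that $\partial_t^2 u_2\in L^\infty_tL^\infty_x$ cannot be justified: since $c_2$ is piecewise constant, $\partial_t^2 u_2(0,\cdot)=c_2^2\Delta f_2$ has jumps across $\partial B(x_1^2,r_1)$ and is generically not even continuous; one only gets $\partial_t^2 u_2\in C([0,T];H^s(\R^3))$ for $s<\tfrac12$ (this is the content of Lemma~\ref{l6}, obtained by differentiating \eqref{eq1} in time and using that $c_2^2\Delta f_2\in H^s$ for $s<\tfrac12$). Consequently the exponent $(3+2s)/6$ does \emph{not} come from interpolating $f_1-f_2$ between $H^1$ and $H^3$; it comes from the H\"older splitting $\|(c_1^{-2}-c_2^{-2})\partial_t^2 u_2\|_{L^2(\Omega)}\le \|c_1-c_2\|_{L^{q_s}(\Omega)}\|\partial_t^2 u_2\|_{L^{6/(3-2s)}(\Omega)}$ with $q_s=\tfrac{6}{3+2s}$, the Sobolev embedding $H^s\hookrightarrow L^{6/(3-2s)}$, and then applying \eqref{esti14} at $q=q_s$, which produces the power $1/q_s=(3+2s)/6$. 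Finally, the paper does not apply observability to $u_1-u_2$ directly (its Dirichlet trace is nonzero); it first lifts the trace by a $G\in H^2((0,T)\times\Omega)$ with $G|_{\partial\Omega}=u_1-u_2$ and $G(0,\cdot)=\partial_tG(0,\cdot)=0$, and this lifting is precisely what generates the $H^{3/2}$ and weighted $t^{-1/2}\partial_t$ boundary norms in \eqref{t8a}.
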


Notably, despite the well-documented instability in the linearized formulation of (\textbf{IP}) \cite{SU3}, Theorem \ref{t8} establishes H\"older stability estimates for the simultaneous recovery of both $c$ and $f$. To our knowledge, these represent the first general stability results for the TAT/PAT problem governed by \eqref{eq1}. The closest prior work \cite{KT} obtained stability only under stringent assumptions: confinement to bounded domains, Lipschitz dependence of $f$ on $c$, and a very small perturbations of constant wave speeds. 

Our results achieve three significant advances: first, replacing \cite{KT}'s restrictive conditions with the physically meaningful admissibility framework; second, extending the analysis to the practically essential unbounded domain setting; and third, remove the strong proximity-to-constant requirement for $c$. These stability estimates provide a rigorous foundation for developing numerical reconstruction algorithms via iterative methods such as Tikhonov regularization \cite{CY}. Computational implementation and convergence analysis are deferred to future research.

The proof of Theorem \ref{t8} combines the arguments used for the uniqueness results, with representations of harmonic functions and  an application of observability inequalities for wave equations with piecewise constant coefficients.

\begin{rem}

Note that under the additional decay condition
\begin{equation}\label{dec}
\int_0^{+\infty} (1+t^4) \|u_j(t,\cdot)\|_{H^2(B_{R_0})}  dt < \infty, \quad j=1,2,
\end{equation}
we obtain the derivative representation
\[
\partial_p^{2k} \partial_\nu^\ell \widehat{u_j}(p,x)\big|_{p=0} = \int_0^{+\infty} t^{2k} \partial_\nu^\ell u_j(t,x)  dt, \quad 
\begin{aligned}
&x \in \partial\Omega, \\
&j,k=1,2, \\
&\ell=0,1.
\end{aligned}
\]
Consequently, under \eqref{dec}, the right-hand side of \eqref{t8a} admits the more explicit form
\[
\sum_{\ell=0}^1 \int_0^{+\infty} (1+t^4) \| \partial_\nu^\ell u_1(t,\cdot) - \partial_\nu^\ell u_2(t,\cdot) \|_{L^2(\partial \Omega)}  dt.
\]

The decay condition \eqref{dec} follows from local energy estimates established for one-dimensional piecewise constant sound speeds in \cite[Theorem 1.1]{DJ}. In higher dimensions, however, such decay is currently only known for smooth coefficients satisfying non-trapping conditions, and remains an open problem for piecewise constant coefficients.

\end{rem}

\section{Low frequency representation}\label{sect:3}

From now on and in all the remaining parts of this article, for any open set $\mathcal O$ of $\mathbb C$ and any Banach space $X$, we denote by $\mathcal H(\mathcal O;X)$ the set of holomorphic functions on $\mathcal O$ taking values in $X$.
We recall that the solution of \eqref{eq1} is lying in $C^1([0,+\infty);L^2(\R^3))\cap C([0,+\infty);H^1(\R^3))$. Moreover, according to \cite[Proposition 8]{HK},  the Laplace transform  in time $\hat{u}$ of of the solution $u$ of \eqref{eq1} is well defined by 
$$\hat{u}(p,\cdot)=\int_0^{+\infty}e^{-pt}u(t,\cdot),\quad p\in\mathbb C_+:=\{z\in\mathbb C:\re z>0\}$$
and  $p\mapsto \hat{u}(p,\cdot)\in \mathcal H(\mathbb C_+;H^1(\R^3))$. Applying the Laplace transform in time to \eqref{eq1} we deduce that
\begin{equation}\label{eq2}-\Delta \hat{u}(p,x)+c^{-2}(x)p^2\hat{u}(p,x)=pc^{-2}(x)f(x),\quad x\in\R^3,\ p\in\mathbb C_+.\end{equation}

This section establishes that for admissible pairs \((c,f)\), the map \(p \mapsto \widehat{u}(p,\cdot)|_{B_R}\) with \(R \geq R_0\) admits an analytic extension in \(p\) to a fixed neighborhood of zero, independent of \(R\). Building on this extension, we generalize the analysis of \cite{KU} to piecewise constant coefficients without requiring the exponential time decay assumed in \cite{KU}.

We initiate the proof by analytically extending \(p \mapsto \widehat{u}(p,\cdot)|_{B_R}\) (\(R \geq R_0\)) to the disc \(\mathbb{D}_s = \{z \in \mathbb{C} : |z| < s\}\) for some sufficiently small \(s > 0\) independent of \(R\). This extension is achieved through low-frequency analysis of solutions to \eqref{eq2} and their cutoff resolvent. Our first result can be stated as follows:

\begin{Thm}\label{l111}Let $f\in H^1(\R^3)$  and let $c\in L^\infty(\R^3)$ be a strictly positive piecewise constant function satisfying \eqref{ccc}. Assume also that supp$(f)\subset\overline{\Omega}$, $c=b_0$ on $\R^3\setminus\overline{\Omega}$ with $b_0>0$ a constant. In addition, let the pair $(c,f)$ be admissible.
Then there exists $\delta>0$ depending on $c$ and $R_0$ such that for all $R\geq R_0$ the map $p\mapsto\hat{u}(p,\cdot)|_{B_{R}}$ can be extended analytically to an element of $\mathcal H(\mathbb D_\delta;H^2(B_{R}))$.

\end{Thm}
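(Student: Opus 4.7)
The plan is to eliminate the apparent degeneracy at $p = 0$ by the substitution $\hat{u} = p v$, recast the resulting Helmholtz-type problem as a Lippmann--Schwinger integral equation on $\overline{\Omega}$, and invert it with a Neumann series whose radius of convergence is controlled quantitatively by the admissibility bound. Plugging $\hat{u} = p v$ into \eqref{eq2} yields
\[
(-\Delta + p^2 c^{-2})\, v = c^{-2} f \quad \text{in } \R^3,
\]
with $v$ outgoing at infinity. At $p = 0$ this is the Poisson equation $-\Delta v = c^{-2} f$ with compactly supported source, uniquely solved by the Newton potential of $c^{-2} f$. It therefore suffices to show that $p \mapsto v(p, \cdot)|_{B_R}$ extends analytically to an $H^2(B_R)$-valued function on some disc $\mathbb{D}_\delta$, with $\delta$ depending only on $c$ and $R_0$; multiplication by $p$ then yields the conclusion for $\hat{u}$.

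Next, split $c^{-2} = b_0^{-2} + V$ with $V := c^{-2} - b_0^{-2}$ supported in $\overline{\Omega}$, and let $R_0(k)$ denote the free outgoing resolvent of $-\Delta + k^2$ on $\R^3$, whose convolution kernel $G_k(x,y) = e^{-k|x-y|}/(4\pi|x-y|)$ is entire in $k$. Applying $R_0(p/b_0)$ to the equation and restricting to $\overline{\Omega}$ produces the Lippmann--Schwinger identity
\[
(I + p^2 \mathcal{K}(p))\, w = g(p) \quad \text{on } \overline{\Omega},
\]
where $w := v|_{\overline{\Omega}}$, $\mathcal{K}(p) h := R_0(p/b_0)(V h)|_{\overline{\Omega}}$ and $g(p) := R_0(p/b_0)(c^{-2} f)|_{\overline{\Omega}}$. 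Since the kernel $(x,y) \mapsto G_{p/b_0}(x,y) V(y)$ is square-integrable on $\overline{\Omega} \times \overline{\Omega}$ (because $|x-y|^{-2}$ is locally integrable in $\R^3$) and analytic in $p$, the operator $\mathcal{K}(p)$ is Hilbert--Schmidt on $L^2(\overline{\Omega})$ and $p \mapsto \mathcal{K}(p)$ is analytic from a neighborhood of $0$ into $\mathcal{L}(L^2(\overline{\Omega}))$.

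The quantitative step is where the admissibility enters. A Schur or Hilbert--Schmidt estimate gives $\norm{\mathcal{K}(p)}_{L^2(\overline{\Omega})} \leq C_0 \norm{V}_{L^\infty(\R^3)}$ for $|p| \leq 1$, with $C_0$ depending only on $R_0$ and $b_0$. Writing $V = (1 - c^2/b_0^2)/c^2$ and combining the admissibility bound $\norm{1 - c^2/b_0^2}_{L^\infty(\R^3)} < 1$ with \eqref{ccc}, one obtains $\norm{V}_{L^\infty(\R^3)} \leq \norm{1 - c^2/b_0^2}_{L^\infty(\R^3)}/a^2 < 1/a^2$. Choosing $\delta := (2 C_0 \norm{V}_{L^\infty(\R^3)})^{-1/2}$, the Neumann series $(I + p^2 \mathcal{K}(p))^{-1} = \sum_{n \geq 0} (-p^2 \mathcal{K}(p))^n$ converges in operator norm on $\mathbb{D}_\delta$, yielding an analytic inverse and therefore an analytic function $w(p) := (I + p^2 \mathcal{K}(p))^{-1} g(p) \in L^2(\overline{\Omega})$ on $\mathbb{D}_\delta$.

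Finally, reconstruct $v$ on any ball $B_R$ with $R \geq R_0$ through
\[
v(p, \cdot)|_{B_R} = R_0(p/b_0)\bigl(c^{-2} f - p^2 V w(p)\bigr)\big|_{B_R}.
\]
Since $c^{-2} f - p^2 V w(p)$ is supported in $\overline{\Omega}$ and depends analytically on $p$ with values in $L^2(\overline{\Omega})$, and since $R_0(k)$ maps $L^2(\overline{\Omega})$ boundedly into $H^2(B_R)$ and is analytic in $k$ (its kernel being entire in $k$, with $|x-y|$ bounded on $B_R \times \overline{\Omega}$), the right-hand side is analytic on $\mathbb{D}_\delta$ with values in $H^2(B_R)$. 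Uniqueness of the decaying solution of \eqref{eq2} for $\re p > 0$ (via a standard energy identity) together with analytic continuation ensures agreement with the Laplace transform $\hat{u}$ originally defined on $\mathbb{C}_+$, completing the extension. The principal obstacle is securing the operator-norm estimate that makes $\delta$ explicit and, crucially, \emph{independent of $R$}; the admissibility condition is exactly what provides this uniform quantitative control on $\norm{V}_{L^\infty(\R^3)}$, and hence on the Neumann radius, for the class of piecewise constant coefficients under consideration.
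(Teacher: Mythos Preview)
Your proof is correct and shares the paper's overall strategy---perturb from the constant-coefficient free resolvent, invert by a Neumann series near $p=0$, and reconstruct on $B_R$ via the entire analytic continuation of the cutoff free resolvent---but with a different choice of perturbation that makes the argument noticeably cleaner in one respect.

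The paper compares the resolvents of $L=-c^2\Delta$ and $L_0=-b_0^2\Delta$ directly and arrives at
\[
\chi_R(L+p^2)^{-1}\chi_{R_0}=\chi_R(L_0+p^2)^{-1}\chi_{R_0}\Bigl(Id+\bigl(\tfrac{c^2}{b_0^2}-1\bigr)-p^2\bigl(\tfrac{c^2}{b_0^2}-1\bigr)\chi_{R_0}(L_0+p^2)^{-1}\chi_{R_0}\Bigr)^{-1},
\]
so that at $p=0$ the operator to be inverted is multiplication by $c^2/b_0^2$. The paper inverts it by a Neumann series in $(c^2/b_0^2-1)$, and this is exactly where the admissibility hypothesis $\norm{c^2/b_0^2-1}_{L^\infty}<1$ is consumed. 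Your Lippmann--Schwinger formulation instead carries an explicit factor $p^2$ in front of the compact perturbation $\mathcal{K}(p)$, so at $p=0$ there is nothing to invert and admissibility plays no essential role in the \emph{existence} of a positive $\delta$: any $c$ with $c,c^{-1}\in L^\infty$ already gives $\delta=\min\bigl(1,(2C_0\norm{c^{-2}-b_0^{-2}}_{L^\infty})^{-1/2}\bigr)>0$. Your closing remark that admissibility ``is exactly what provides this uniform quantitative control'' therefore overstates its role in your own argument; it affects only the numerical value of $\delta$, not its positivity. Both routes secure the $R$-independence of $\delta$ in the same way, by confining the inversion step to a fixed bounded region (you on $\overline{\Omega}$, the paper on $B_{R_0}$) and only afterwards propagating to $B_R$ through the free resolvent.
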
 
\begin{proof} Since when $c$ is constant this result is trivial, we may assume without loss generality that $c\not\equiv b_0$.
In view of \eqref{ccc}, we have $c^{-2}\in L^\infty(\R^3)$ and we can set $L^2(\R^3;c^{-2}dx)$ the space $L^2(\R^3)$ with the scalar product
$$\left\langle g,h \right\rangle_{L^2(\R^3;c^{-2}dx)}=\int_{\R^3} gh c^{-2}dx,\quad g,h\in L^2(\R^3;c^{-2}dx).$$
 We define on $L^2(\R^3;c^{-2}dx)$ the unbounded operator  $L$ with domain $$D(L):=\{v\in L^2(\R^3;c^{-2}dx):\ -c^2\Delta v\in L^2(\R^3;c^{-2}dx)\} $$ given by $Lv:=-c^2\Delta v,\  v\in D(L).$ Recalling that $c^{-2}\in L^\infty(\R^3)$, we obtain 
$D(L):=\{v\in L^2(\R^3):\ -\Delta v\in L^2(\R^3)\}=H^2(\R^3)$
and, following \cite[Proposition A.1]{Sh}, we can prove that $L$ is a selfadjoint and none-negative operator.  Combining this with \eqref{eq2}, we can prove that $\hat{u}(p,\cdot)=p(L+p^2)^{-1}f$, $p\in\mathbb C_+$. We set also $L_0=-b_0^{2}\Delta$. Fix $h\in H^1(\R^3)$ and, for all $R\geq R_0$, set $\chi_R\in C^\infty_0(\R^3;[0,1])$ such that $\chi_R=1$ on a neighborhood of $B_R$. From now on we denote by $\chi_R$, $R\geq R_0$, the operator of multiplication by the map $\chi_R$ and for any Banach spaces $X$, $Y$ we denote by $\mathcal B(X,Y)$ the space of bounded linear maps from $X$ to $Y$, with the notation $\mathcal B(X,X)=\mathcal B(X)$. Applying Lemma \ref{l11} in the Appendix, we can consider $v,v_0\in C^2([0,+\infty);L^2(\R^3))\cap C([0,+\infty);H^2(\R^3))$ solving respectively the problems
\begin{equation}\label{l111a}
\left\{
\begin{array}{ll}
c^{-2}(x)\partial_t^2v - \Delta v = 0, & \textrm{in } \mathbb{R}_+ \times \mathbb{R}^3, \\
v(0,x) = 0, \quad \partial_tv(0,x) = \chi_{R_0}(x)h(x), & x \in \mathbb{R}^3,
\end{array}
\right.
\end{equation}
\begin{equation}\label{l111b}
\left\{
\begin{array}{ll}
b^{-2}_0\partial_t^2v_0 - \Delta v_0 = 0, & \textrm{in } \mathbb{R}_+ \times \mathbb{R}^3, \\
v_0(0,x) = 0, \quad \partial_tv_0(0,x) = \chi_{R_0}(x)h(x), & x \in \mathbb{R}^3.
\end{array}
\right.
\end{equation}
Then,  $w=v-v_0$ solves the problem
$$\left\{
\begin{array}{ll}
c^{-2}(x)\partial_t^2w - \Delta w = (b_0^{-2}-c^{-2}(x))\partial_t^2v_0, & \textrm{in } \mathbb{R}_+ \times \mathbb{R}^3, \\
w(0,x) = 0, \quad \partial_tw(0,x) = 0, & x \in \mathbb{R}^3.
\end{array}
\right.$$
Applying the Laplace transform in time to $w$, denoted by $\hat{w}$, we get
$$\begin{aligned}p^2\hat{w}(p) - c^{2}(x)\Delta \hat{w}(p) &=c^2(b_0^{-2}-c^{-2}(x))(p^2\widehat{v_0}(p)-\chi_{R_0}h)\\
&=c^2(b_0^{-2}-c^{-2}(x))p^2(L_0+p^2)^{-1}\chi_{R_0}h-c^2(b_0^{-2}-c^{-2}(x))\chi_{R_0}h,\quad p\in\mathbb C_+.\end{aligned}$$
In addition, we have
$$\hat{w}(p)=\hat{v}(p)-\widehat{v_0}(p)=(L+p^2)^{-1}\chi_{R_0}h-(L_0+p^2)^{-1}\chi_{R_0}h,\quad p\in\mathbb C_+$$
It follows that, for all $p\in\mathbb C_+$, we have
$$(L+p^2)^{-1}\chi_{R_0}h-(L_0+p^2)^{-1}\chi_{R_0}h=(L+p^2)^{-1}\left(p^2\left(\frac{c^2}{b_0^2}-1\right)(L_0+p^2)^{-1}\chi_{R_0}h-\left(\frac{c^2}{b_0^2}-1\right)\chi_{R_0}h\right)$$
and, for all $p\in\mathbb C_+$, we get
$$(L+p^2)^{-1}\left(Id+\left(\frac{c^2}{b_0^2}-1\right)-p^2\left(\frac{c^2}{b_0^2}-1\right)(L_0+p^2)^{-1}\right)\chi_{R_0}h=(L_0+p^2)^{-1}\chi_{R_0}h.$$
Recalling that 
$\chi_{R_0}\left(\frac{c^2}{b_0^2}-1\right)=\left(\frac{c^2}{b_0^2}-1\right)=\left(\frac{c^2}{b_0^2}-1\right)\chi_{R_0}$, we obtain
\begin{equation}\label{l111c}(L+p^2)^{-1}\chi_{R_0}(Id+\left(\frac{c^2}{b_0^2}-1\right)-p^2\left(\frac{c^2}{b_0^2}-1\right)\chi_{R_0}(L_0+p^2)^{-1}\chi_{R_0})h=(L_0+p^2)^{-1}\chi_{R_0}h,\quad p\in\mathbb C_+.\ee
Since $b_0$ is constant, as a consequence of the strong Huygens principle it is well known (see e.g. \cite[Section 2.1]{Sj}) that, for all $R\geq R_0$, the map 
$\chi_{R}(L_0+p^2)^{-1}\chi_{R_0}$ can be extended analytically to an element of $\mathcal H(\mathbb C;\mathcal B(L^2(\R^3);H^2(\R^3)))$ still denoted by $\chi_{R}(L_0+p^2)^{-1}\chi_{R_0}$, $p\in\mathbb C$. Let us denote
$$0<M_1=\sup_{p\in \mathbb D_1}\norm{\chi_{R_0}(L_0+p^2)^{-1}\chi_{R_0}}_{\mathcal B(L^2(\R^3))},\quad M_2=\norm{\frac{c^2}{b_0^2}-1}_{L^\infty(\R^3)}$$
and from the admissibility condition imposed to $(c,f)$, let us observe that $M_2<1$. Moreover, recalling that $c\not\equiv b_0$, we have $M_2>0$. Then, we can fix 
$$0<\delta=\min\left(\sqrt{\frac{1-M_2}{2M_1M_2}},1\right)$$
and note that $\delta$ depends only on $c$  and $R_0$. For all $p\in\mathbb D_\delta$, we have
$$\begin{aligned}&\norm{\left(\frac{c^2}{b_0^2}-1\right)-p^2\left(\frac{c^2}{b_0^2}-1\right)\chi_{R_0}(L_0+p^2)^{-1}\chi_{R_0}}_{\mathcal B(L^2(\R^3))}\\
&\leq\norm{\left(\frac{c^2}{b_0^2}-1\right)}_{L^\infty(\R^3)}+|p|^2\norm{\left(\frac{c^2}{b_0^2}-1\right)}_{L^\infty(\R^3)}\norm{\chi_{R_0}(L_0+p^2)^{-1}\chi_{R_0}}_{\mathcal B(L^2(\R^3))}\\
&\leq M_2+\delta^2M_2M_1 \leq M_2+\frac{1-M_2}{2}<1.\end{aligned}$$
 Thus, the map $\left(Id+\left(\frac{c^2}{b_0^2}-1\right)-p^2\left(\frac{c^2}{b_0^2}-1\right)\chi_{R_0}(L_0+p^2)^{-1}\chi_{R_0}\right)$ will be bounded invertible for $p\in\mathbb D_\delta$ and $p\mapsto \left(Id+\left(\frac{c^2}{b_0^2}-1\right)-p^2\left(\frac{c^2}{b_0^2}-1\right)\chi_{R_0}(L_0+p^2)^{-1}\chi_{R_0}\right)^{-1}\in \mathcal H(\mathbb D_\delta;\mathcal B(L^2(\R^3))$.
It follows that, for all $R\geq R_0$, we have
$$\chi_{R}(L+p^2)^{-1}\chi_{R_0}=\chi_{R}(L_0+p^2)^{-1}\chi_{R_0}\left(Id+\left(\frac{c^2}{b_0^2}-1\right)-p^2\left(\frac{c^2}{b_0^2}-1\right)\chi_{R_0}(L_0+p^2)^{-1}\chi_{R_0}\right)^{-1}$$
and the map $p\mapsto \chi_{R}(L+p^2)^{-1}\chi_{R_0}$ can be extended analytically to an element of\\ $\mathcal H(\mathbb D_\delta;\mathcal B(L^2(\R^3);H^2(\R^3))$. Then, recalling that
$$\hat{u}(p,\cdot)|_{B_R}=p\chi_R(L+p^2)^{-1}\chi_{R_0}f|_{B_R},\quad p\in\mathbb C_+,$$
we deduce that $p\mapsto \hat{u}(p,\cdot)$ can be extended to an element of $\mathcal H(\mathbb D_\delta;H^2(\R^3))$.\end{proof}

From now on and in all the remaining parts of this article, we denote by $\delta$ the constant appearing in Theorem \ref{l111} Applying Theorem \ref{l111}, we now prove how the frequency representation formula of \cite{KU} can be extended to admissible pairs $(c,f)$ without requiring any decay in time of the local energy.  We start with the following first preliminary result.
\begin{lem}\label{l1} Let $f\in H^1(\R^3)$ be non-uniformly vanishing and let $c\in L^\infty(\R^3)$ be a strictly positive piecewise constant function satisfying \eqref{ccc}. Assume also that the pair $(c,f)$ is admissible and supp$(f)\subset\overline{\Omega}$, $c=b_0$ on $\R^3\setminus\overline{\Omega}$ with $b_0>0$ a constant. Let $p\mapsto\hat{u}(p,\cdot)|_{B_{R}}$ denotes the analytic extension of $\hat{u}$  to an element of $\mathcal H(\mathbb D_\delta;H^2(B_{R}))$. Then, for
\begin{equation}\label{l1b}\hat{u}(p,\cdot)|_{B_{R_0}}=\sum_{k=0}^\infty u^{(k)}p^k,\quad p\in\mathbb D_\delta,\end{equation}
 the functions $u^{(k)}\in H^2(B_{R_0})$, $k\geq0$,  satisfy the following conditions
\begin{equation}\label{l1c}u^{(0)}\equiv 0,\ -\Delta u^{(1)}=c^{-2}f,\ -\Delta u^{(k)}=-c^{-2}u^{(k-2)},\ k\geq 2,\ \textrm{in }B_{R_0},\end{equation}
and $u^{(2)}$ is a constant map defined by 
\begin{equation}\label{l1d}u^{(2)}(x)=-\frac{1}{2\pi b_0}\int_{\R^3}\frac{f(y)}{c(y)^2}dy,\quad x\in B_{R_0}.\end{equation}
\end{lem}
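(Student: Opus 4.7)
The plan is to combine two facts from \thmref{l111}: first, the map $p \mapsto \hat{u}(p,\cdot)|_{B_R}$ is holomorphic on $\mathbb{D}_\delta$ for every $R \geq R_0$, so the Taylor coefficients $u^{(k)}$ in \eqref{l1b} are consistent across different $R$ and belong to $H^2_{\mathrm{loc}}(\R^3)$; second, the extension built in the proof of \thmref{l111} writes $\hat{u}(p,\cdot)|_{B_R} = p\,\chi_R(L+p^2)^{-1}\chi_{R_0}f$ with the right factor $\chi_R(L+p^2)^{-1}\chi_{R_0}$ analytic at $p=0$. Evaluating at $p=0$ immediately yields $u^{(0)}\equiv 0$.

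Next I would observe that both sides of \eqref{eq2} are holomorphic in $p\in\mathbb{D}_\delta$ in the distributional sense on $B_R$, so by analytic continuation the identity persists on the whole disc. Substituting the series \eqref{l1b} and matching powers of $p$ produces the cascade \eqref{l1c}. Since $u^{(0)}\equiv 0$, the $p^2$-identity reduces to $-\Delta u^{(2)}=0$ on $B_R$ for every $R\geq R_0$, so $u^{(2)}$ extends to a smooth harmonic function on $\R^3$.

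To obtain the explicit formula \eqref{l1d}, I would return to the representation
\[
\hat{u}(p,\cdot)|_{B_R} = p\,\chi_R (L_0+p^2)^{-1}\chi_{R_0}\,M_p^{-1}f
\]
established in the proof of \thmref{l111}, where $M_p := I+A-p^2 A\,\chi_{R_0}(L_0+p^2)^{-1}\chi_{R_0}$ and $A := c^2/b_0^2-1$. Two expansions are required. Since $M_p-M_0$ contains only terms of order $p^2$ and higher, a Neumann series gives $M_p^{-1} f = b_0^2 c^{-2}f+O(p^2)$, so the coefficient of $p$ in $M_p^{-1}f$ vanishes. On the other hand, $(L_0+p^2)^{-1}\chi_{R_0}$ acts as convolution with the analytic kernel $b_0^{-2}e^{-p|x-y|/b_0}/(4\pi|x-y|)$, and Taylor-expanding this in $p$ produces a constant-in-$x$ correction at order $p^1$. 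Multiplying by the overall factor of $p$ and collecting the $p^2$-coefficient of $\hat{u}$, only this constant term survives, paired with $F_0 = b_0^2 f/c^2$; the result is precisely the integral in \eqref{l1d}, which is spatially constant, thus confirming both the constancy of $u^{(2)}$ and its value.

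The main technical obstacle lies in the careful bookkeeping in this last step: one must verify the vanishing of the $p^1$-term in $M_p^{-1}f$, for otherwise it would contribute a nontrivial $x$-dependent term through the $p^0$-part of $(L_0+p^2)^{-1}\chi_{R_0}$, which is the Newtonian potential and certainly not constant in $x$. Once this cancellation is in hand, the spatially constant odd-order term of the Helmholtz kernel is the unique surviving contribution at order $p^2$, and it reduces the computation of $u^{(2)}$ to a single integral of $f/c^2$, establishing \eqref{l1d}.
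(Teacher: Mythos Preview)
Your argument is correct, and for the recursion \eqref{l1c} it coincides with the paper's: both analytically continue \eqref{eq2} to $\mathbb D_\delta$ and match powers of $p$. Where you diverge is in the identification of $u^{(0)}$ and $u^{(2)}$. The paper passes to $p=i\tau$, verifies that the extension satisfies the Sommerfeld radiation condition via a limiting absorption argument, invokes an integral representation from \cite{LU} for outgoing Helmholtz solutions, and then quotes \cite[Proposition 2.1]{KU} (together with \cite[Lemma 2.1]{KM}) to read off $u^{(0)}\equiv 0$ and the constant value of $u^{(2)}$. Your route is more self-contained: the explicit factor of $p$ in $\hat u(p,\cdot)=p\,\chi_R(L+p^2)^{-1}\chi_{R_0}f$ from \thmref{l111} gives $u^{(0)}=0$ at once, and a direct Taylor expansion of the free kernel $b_0^{-2}e^{-p|x-y|/b_0}/(4\pi|x-y|)$, combined with the observation that $M_p^{-1}f$ has no linear term in $p$, isolates the $p^2$-coefficient as the spatially constant integral in \eqref{l1d}. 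Your approach avoids the Sommerfeld/radiation machinery and the external references entirely, at the cost of a small bookkeeping exercise; the paper's approach situates the computation in the standard scattering-theoretic framework already developed in \cite{KU,KM,LU}, which is natural given that those references are used elsewhere in the paper. Both are equally rigorous; yours is arguably the more elementary path to \eqref{l1d}.
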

\begin{proof}  Let us first recall that in view of Theorem \ref{l111}, for all $R\geq R_0$, $p\mapsto\hat{u}(p,\cdot)|_{B_{R}}\in \mathcal H(\mathbb D_\delta;H^2(B_{R}))$. It follows that  $$p\mapsto -\Delta \hat{u}(p,\cdot)+c^{-2}p^2\hat{u}(p,\cdot)-pc^{-2}f$$ is lying in $\mathcal H(\mathbb D_\delta;L^2(B_{R}))$. Thus,  \eqref{eq2} and unique continuation of analytic functions, implies that
\begin{equation}\label{l1e}-\Delta \hat{u}(p,x)+c^{-2}(x)p^2\hat{u}(p,x)= pc^{-2}(x)f(x),\quad x\in B_{R},\ p\in\mathbb D_\delta.\end{equation} 
Inserting the expression \eqref{l1b} into this equation, with $R=R_0$, we obtain
\begin{equation}\label{l1f}-\Delta u^{(1)}=c^{-2}f,\ -\Delta u^{(k)}=-c^{-2}u^{(k-2)},\ k\geq 2,\ \textrm{in }B_{R_0}.\end{equation}
Fixing $p=i\tau$, $\tau\in(-\delta,\delta)$, in \eqref{l1e}, we deduce that  $\hat{u}(i\tau,\cdot)\in H^2_{loc}(\R^3)$ satisfies the condition 
$$\Delta \hat{u}(i\tau,x)+c^{-2}(x)\tau^2\hat{u}(i\tau,x)= -i\tau c^{-2}(x)f(x),\quad x\in \R^3,\ \tau\in(-\delta,\delta).$$
Fixing $v(\tau,\cdot)=b_0\hat{u}(ib_0\tau,\cdot)$, $\tau\in(-\delta/b_0,\delta/b_0)$, we deduce that  $v(\tau,\cdot)\in H^2_{loc}(\R^3)$ solves the equation
$$\Delta v(\tau,x)+\left(\frac{c}{b_0}\right)^{-2}(x)\tau^2v(\tau,x)= -i\tau \left(\frac{c}{b_0}\right)^{-2}(x)f(x),\quad x\in \R^3,\ \tau\in(-\delta/b_0,\delta/b_0).$$
In view of Theorem \ref{l111}, for all  $\tau\in(-\delta/b_0,\delta/b_0)$ and a.e. $x\in \R^3$, $v(\tau,x)$ can be defined by the following limiting absorption principle
\begin{equation}\label{l1g}\begin{aligned}
&v(\tau,x)\\
&=-i\tau \lim_{\epsilon\to0_+}(\Delta+\tau^2-i\epsilon)^{-1}\chi_{R_0}\left(Id+\left(\frac{c^2}{b_0^2}-1\right)-\tau^2\left(\frac{c^2}{b_0^2}-1\right)\chi_{R_0}(\Delta+\tau^2)^{-1}\chi_{R_0}\right)^{-1}f(x),\end{aligned}\end{equation}
and it satisfies the classical Sommerfeld radiation condition
\begin{equation}\label{l1h}\lim_{|x|\to+\infty}\partial_{|x|}v(\tau,x)-i\tau v(\tau,x))=0,\quad \tau\in(-\delta/b_0,\delta/b_0).\end{equation}
Therefore, applying \cite[Lemma 3.1]{LU}, for almost every $x\in\R^3$ and for all $\tau\in(-\delta/b_0,\delta/b_0)$, we obtain 
$$v(\tau,x)=- \tau^2\int_{B_{R_0}}\left(1-\frac{b_0^2}{c(y)^2}\right)\hat{v}(\tau ,y)\Phi_\tau(x-y)dy-i \tau\int_{B_{R_0}}\frac{b_0^2f(y)}{c(y)^2}\Phi_\tau(x-y)dy,$$
where 
$$\Phi_s(x)=\frac{e^{is |x|}}{4\pi |x|},\quad x\in\R^3\setminus\{0\},\ s\in\R.$$
Combining this with \cite[Proposition 2.1]{KU},  for almost every $x\in\R^3$ and for all $\tau\in(-\delta,\delta)$, we find
$$\hat{u}(i\tau ,x)=- \tau^2\int_{B_{R_0}}\left(b_0^{-2}-c(y)^{-2}\right)\hat{u}(i\tau ,y)\Phi_{b_0^{-1}\tau}(x-y)dy-i\tau \int_{B_{R_0}}\frac{f(y)}{c(y)^2}\Phi_{b_0^{-1}\tau}(x-y)dy.$$
Then, following \cite[Proposition 2.1]{KU} (see also \cite[Lemma 2.1]{KM}), we obtain $u^{(0)}\equiv 0$ and \eqref{l1d}.
\end{proof}

Now let us consider, for $j=1,2$,  $f_j\in H^1(\R^3)$ and  $c_j\in L^\infty(\R^3)$  be a strictly positive piecewise constant function, satisfying \eqref{ccc}, such that $(c_j,f_j)$ is admissible,  supp$(f_j)\cup$supp$(c_1-c_2)\subset\overline{\Omega}$, $c_1=c_2=b_0$ on $\R^3\setminus\overline{\Omega}$. For $j=1,2$, we consider $\delta_j$ the constant $\delta$ of Theorem \ref{l111}, associated with $c=c_j$, and we fix $\delta=\min(\delta_1,\delta_2)$. Using the above properties, we obtain the following.

\begin{lem}\label{l2} For $j=1,2$, let $f_j\in H^1(\R^3)$ be non-uniformly vanishing and let $c_j\in L^\infty(\R^3)$ be a piecewise constant function such that the condition \eqref{ccc} is fulfilled with $c=c_j$, the pair $(c_j,f_j)$ is admissible and supp$(f_j)\subset\overline{\Omega}$. Assume also that $c_j=b_0$ on $\R^3\setminus\overline{\Omega}$ with $b_0>0$ a constant. Let $u_j$ be the solution of \eqref{eq1} with $c=c_j$ and $f=f_j$.
Assuming that the condition
\begin{equation}\label{l2a}u_1(t,x)=u_2(t,x),\quad (t,x)\in\R_+\times\partial\Omega\end{equation}
 is fulfilled, we obtain 
\begin{equation}\label{l2b}B=\frac{1}{2\pi b_0}\int_{\R^3}\frac{f_1(y)}{c_1(y)^2}dy=\frac{1}{2\pi b_0}\int_{\R^3}\frac{f_2(y)}{c_2(y)^2}dy.\end{equation}
Moreover, fixing
$$\widehat{u_j}(p,\cdot)|_{B_{R_0}}=\sum_{k=0}^\infty u^{(k)}_jp^k,\quad p\in\mathbb D_\delta,\quad j=1,2$$
and $u^{(4)}=u_1^{(4)}-u_2^{(4)}$ we have 
\begin{equation}\label{l2e}\left\{\begin{array}{ll}-\Delta u^{(4)}=c^{-2}_2u_2^{(2)}-c^{-2}_1u_1^{(2)}=B(c^{-2}_1-c^{-2}_2),\quad &\textrm{in}\ B_{R_0},\\  
u^{(4)}(x)=0, &x\in B_{R_0}\setminus\overline{\Omega} .\end{array}\right.\end{equation}
Finally, for any $\phi\in H^1(B_{R_0})$ satisfying $-\Delta\phi=0$ in $B_{R_0}$, we obtain the following identity
\begin{equation}\label{l2c}  \int_{B_{R_0}}(c^{-2}_1-c^{-2}_2)\phi dx=0.\end{equation}

\end{lem}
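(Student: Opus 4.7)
The strategy is to convert the single boundary equality \eqref{l2a} into equalities of Taylor coefficients on the exterior region $B_{R_0}\setminus\overline{\Omega}$, then extract \eqref{l2b}--\eqref{l2c} from the recursion supplied by \lemref{l1}. First, taking the Laplace transform in time of \eqref{l2a} gives $\widehat{u_1}(p,\cdot)=\widehat{u_2}(p,\cdot)$ on $\partial\Omega$ for every $p\in\mathbb C_+$. Outside $\overline{\Omega}$, where $c_1=c_2=b_0$ and $f_j\equiv0$, the difference $w=\widehat{u_1}-\widehat{u_2}$ is an $H^1$ solution of the exterior Dirichlet problem
\begin{equation*}
-\Delta w + b_0^{-2}p^2 w=0\ \text{in}\ \R^3\setminus\overline{\Omega},\quad w|_{\partial\Omega}=0,
\end{equation*}
which for $p\in\mathbb C_+$ admits only the trivial solution: multiplying by $\overline w$, integrating, and taking real and imaginary parts exploits $\re(p^2)>0$ when $\im p=0$ and $\im(p^2)\neq 0$ otherwise. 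Hence $\widehat{u_1}=\widehat{u_2}$ on $\R^3\setminus\overline{\Omega}$ for every $p\in\mathbb C_+$. By \thmref{l111}, both maps extend analytically to $\mathbb D_\delta$ as $H^2(B_{R_0})$-valued holomorphic functions, so the equality persists on $B_{R_0}\setminus\overline{\Omega}$ for all $p\in\mathbb D_\delta$. Identifying Taylor coefficients in $p$ yields $u_1^{(k)}=u_2^{(k)}$ on $B_{R_0}\setminus\overline{\Omega}$ for every $k\geq 0$.

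By \lemref{l1}, each $u_j^{(2)}$ is the constant $-\frac{1}{2\pi b_0}\int_{\R^3}f_j/c_j^2\,dy$ throughout $B_{R_0}$. Since $B_{R_0}\setminus\overline{\Omega}$ is a nonempty open set on which $u_1^{(2)}=u_2^{(2)}$ by the previous step, these two constants coincide, proving \eqref{l2b}; denote this common value $-B$, which is nonzero by admissibility \eqref{d1a}. Applying the recursion \eqref{l1c} at level $k=4$ to each $u_j$ gives $-\Delta u_j^{(4)}=-c_j^{-2}u_j^{(2)}=c_j^{-2}B$ in $B_{R_0}$; subtracting produces the PDE $-\Delta u^{(4)}=B(c_1^{-2}-c_2^{-2})=c_2^{-2}u_2^{(2)}-c_1^{-2}u_1^{(2)}$ in $B_{R_0}$, while the vanishing $u^{(4)}=0$ on $B_{R_0}\setminus\overline{\Omega}$ is the $k=4$ case of the coefficient equality above. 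Together these establish \eqref{l2e}.

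For \eqref{l2c}, fix a harmonic $\phi\in H^1(B_{R_0})$; elliptic regularity makes $\phi$ smooth on the interior. Since $u^{(4)}\in H^2(B_{R_0})$ vanishes on $B_{R_0}\setminus\overline{\Omega}$---an open neighborhood of $\partial B_{R_0}$ because $\overline{\Omega}\subset B_{R_0}$---it lies in $H^2_0(B_{R_0})$ and is approximable in $H^1$ by functions $\psi\in C^\infty_c(B_{R_0})$. For such $\psi$ one has $\int_{B_{R_0}}\nabla\psi\cdot\nabla\phi\,dx=-\int_{B_{R_0}}\psi\Delta\phi\,dx=0$, and passing to the limit yields $\int_{B_{R_0}}\nabla u^{(4)}\cdot\nabla\phi\,dx=0$. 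One integration by parts, whose boundary term vanishes since $u^{(4)}$ and $\nabla u^{(4)}$ are zero near $\partial B_{R_0}$, gives
\begin{equation*}
\int_{B_{R_0}}B(c_1^{-2}-c_2^{-2})\phi\,dx=\int_{B_{R_0}}(-\Delta u^{(4)})\phi\,dx=\int_{B_{R_0}}\nabla u^{(4)}\cdot\nabla\phi\,dx=0,
\end{equation*}
and dividing by $B\neq 0$ proves \eqref{l2c}.

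The main obstacle is the first step: extracting from a \emph{single} passive measurement on $\partial\Omega$ the full exterior equality $\widehat{u_1}=\widehat{u_2}$ on $B_{R_0}\setminus\overline{\Omega}$ for $p$ in the complex disc $\mathbb D_\delta$. Exterior uniqueness of the Helmholtz-type equation for $p\in\mathbb C_+$ does this on the physical frequency half-plane, and the low-frequency analytic extension of \thmref{l111} carries the conclusion into the neighborhood of zero where the Taylor expansion lives. Once this is in place, everything else is an algebraic consequence of \lemref{l1} together with the nondegeneracy $B\neq 0$ built into the admissibility hypothesis.
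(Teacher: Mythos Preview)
Your proof is correct and follows the same overall architecture as the paper: propagate the boundary equality to the exterior, pass to Taylor coefficients via the analytic extension of \thmref{l111}, read off \eqref{l2b} from the constancy of $u_j^{(2)}$, derive \eqref{l2e} from the recursion in \lemref{l1}, and integrate by parts against a harmonic $\phi$ to obtain \eqref{l2c}. The one genuine local difference is how you obtain $\widehat{u_1}=\widehat{u_2}$ on $\R^3\setminus\overline{\Omega}$ for $p\in\mathbb C_+$: the paper argues in the time domain, observing that $u_1-u_2$ solves a homogeneous exterior wave IBVP with zero initial and Dirichlet data, hence vanishes by hyperbolic uniqueness, and only then takes the Laplace transform; you instead take the Laplace transform first and invoke elliptic uniqueness for the exterior Dirichlet problem $-\Delta w+b_0^{-2}p^2 w=0$ via an energy identity. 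Both routes are valid---the paper's avoids any discussion of behavior at spatial infinity (hyperbolic energy methods handle that automatically), while yours is slightly more self-contained in the frequency picture but does rely on $\widehat{u_j}(p,\cdot)\in H^1(\R^3)$ globally to justify dropping the boundary term at infinity.
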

\begin{proof}
In view of Lemma \ref{l1}, we set
$$\hat{u_j}(p,\cdot)|_{B_{R_0}}=\sum_{k=1}^\infty u_j^{(k)}p^k,\quad p\in\mathbb D_\delta$$
and we recall that $u_j^{(k)}$, $j=1,2$ and $k\in\mathbb N$, satisfies the conditions \eqref{l1c}-\eqref{l1d} with $c=c_j$ and $f=f_j$.
We set $u=u_1-u_2$ and observe that the restriction of $u$ to $\R_+\times (\R^3\setminus\overline{\Omega})$ solves the initial boundary value problem
$$\left\{\begin{array}{ll}c_1^{-2}(x)\partial_t^2u-\Delta u=0,\quad &\textrm{in}\ \R_+\times(\R^3\setminus\overline{\Omega}),\\  u(0,x)=0,\quad \partial_tu(0,x)=0,&x\in \R^3\setminus\overline{\Omega}\\
u(t,x)=0, &(t,x)\in \R_+\times\partial\Omega .\end{array}\right.$$
Then, the uniqueness of the solution of this initial boundary value problem implies that $u=0$ on $\R_+\times (\R^3\setminus\overline{\Omega})$ which implies that $u_1=u_2$ on $\R_+\times (\R^3\setminus\overline{\Omega})$. Thus, we have
$$\widehat{u_1}(p,\cdot)|_{B_{R_0}\setminus \overline{\Omega}}=\widehat{u_2}(p,\cdot)|_{B_{R_0}\setminus \overline{\Omega}},\quad p\in\mathbb C_+\cap \mathbb D_\delta$$
and, by unique continuation for  analytic functions, we find
$$\widehat{u_1}(p,\cdot)|_{B_{R_0}\setminus \overline{\Omega}}=\widehat{u_2}(p,\cdot)|_{B_{R_0}\setminus \overline{\Omega}},\quad p\in \mathbb D_\delta.$$
It follows that
$$u_1^{(k)}(x)=u_2^{(k)}(x),\quad x\in B_{R_0}\setminus\overline{\Omega},\ k\in\mathbb N.$$ 
Recalling that $u_j^{(2)}$ is a constant map, given by \eqref{l1d} with $c=c_j$ and $f=f_j$, we deduce that \eqref{l2b} holds true and 
$$u_1^{(2)}(x)=u_2^{(2)}(x)=-B,\quad x\in B_{R_0}.$$
Moreover,   applying \eqref{l1c}-\eqref{l1d} and \eqref{l2b},  we get \eqref{l2e}. Finally, fixing $\phi\in H^1(B_{R_0})$, satisfying $\Delta \phi=0$ in $B_{R_0}$, multiplying each of these equations by $\phi$, using the fact that $u^{(4)}=0$ on $B_{R_0}\setminus\overline{\Omega}$  and integrating by parts, we obtain 
$$  B\int_{B_{R_0}}(c^{-2}_1-c^{-2}_2)\phi dx=-\int_{B_{R_0}}\Delta u^{(4)}\phi dx=-\int_{B_{R_0}} u^{(4)} \Delta\phi dx=0.$$
Recalling that  $(c_1,f_1)$ is admissible, we deduce that $B\neq0$ which implies that \eqref{l2c} holds true.\end{proof}

\section{Proof of of Theorem \ref{t1} and Corollary \ref{cc1}}\label{sect:4}

This section is devoted to the proofs of Theorem \ref{t1} and Corollary \ref{cc1}. 

\textbf{Proof of Theorem \ref{t1}.}

We use the notation of Lemma \ref{l2} and we will prove that  \eqref{l2a} implies that $c_1=c_2$ and $f_1=f_2$. Applying Lemma \ref{l2}, we deduce that, for any $\phi\in H^2(B_{R_0})$ satisfying $-\Delta\phi=0$ in $B_{R_0}$, we have
\begin{equation}\label{t1f}  \int_{\Omega}c^{-2}_1\phi dx-\int_{\Omega}c^{-2}_2\phi dx=\int_{\Omega}(c^{-2}_1-c^{-2}_2)\phi dx=\int_{B_{R_0}}(c^{-2}_1-c^{-2}_2)\phi dx=0.\end{equation}
On the other hand, using the fact that
$$B(x_k^j,r_k^j)\cap B(x_\ell^j,r_\ell^j)=\emptyset,\quad j=1,2,\ k,\ell=1,\ldots,N_j,\ k\neq\ell,$$
we find
$$ \int_{\Omega}c^{-2}_j\phi dx=b_0^{-2} \int_{\Omega}\phi dx+\sum_{k=1}^{N_j}((b_k^j)^{-2}-b_0^{-2})\int_{B(x_k^j,r_k^j)}\phi dx$$
and applying the mean value theorem for harmonic functions, we find
$$ \int_{\Omega}c^{-2}_j\phi dx=b_0^{-2} \int_{\Omega}\phi dx+\sum_{k=1}^{N_j}((b_k^j)^{-2}-b_0^{-2})|B_{r_k^j}|\phi(x_k^j).$$
Combining this with \eqref{t1f}, for any $\phi\in H^2(B_{R_0})$ satisfying $-\Delta\phi=0$ in $B_{R_0}$, we obtain
\begin{equation}\label{t1h}  \sum_{k=1}^{N_1}((b_k^1)^{-2}-b_0^{-2})|B_{r_k^1}|\phi(x_k^1)-\sum_{k=1}^{N_2}((b_k^2)^{-2}-b_0^{-2})|B_{r_k^2}|\phi(x_k^2)=0.\end{equation}
For $j=1,2$, let us consider $v_j\in L^2(\Omega)$, $j=1,2$, the solution in the transposition sense of the boundary value problem
\begin{equation}\label{t1g}\left\{\begin{array}{ll}-\Delta v_j=\sum_{k=1}^{N_j}\lambda_k^j\delta_{x_k^j},\quad &\textrm{in}\ B_{R_0},\\  v_j=0, &\textrm{on }\partial B_{R_0},\end{array}\right.\end{equation}
where we set $$\lambda_k^j=(b_k^j)^{-2}-b_0^{-2})|B_{r_k^j}|\neq0,\quad k=1,\ldots,N_j.$$
According to Theorem \ref{t3} in the appendix, \eqref{t1g} admits a unique solution  $v_j\in L^2(B_{R_0})$. Moreover, for $j=1,2$ and  all $r>0$, fixing
$$\Omega_r^j:=B_{R_0}\setminus\cup_{k=1}^{N_j} \overline{B(x_k^j,r)},$$
and choosing $r>0$ sufficiently small, we have $\Omega_r^j\neq\emptyset$, $\partial B_{R_0}\subset\partial\Omega_r^j$  and  $v|_{\Omega_r^j}\in H^2(\Omega_r^j)$. Finally, for $\nu$ the outward unit normal vector to $\partial B_{R_0}$ and $\partial_\nu$ the corresponding normal derivative, we have $\partial_\nu v_j|_{\partial B_{R_0}}\in L^2(\partial B_{R_0})$ and for any $w\in H^2(B_{R_0})$ we have
\begin{equation}\label{t1aa}\sum_{k=1}^{N_j}\lambda_k^jw(x_k)=- \int_{\partial B_{R_0}}\partial_{\nu} v_j wd\sigma(x)-\int_{ B_{R_0}} v_j \Delta wdx,\quad j=1,2.  \end{equation}
Let us fix $g\in H^{3/2}(\partial B_{R_0})$ and consider $\phi\in H^2(B_{R_0})$ solving the boundary value problem
$$\left\{\begin{array}{ll}-\Delta \phi=0,\quad &\textrm{in}\ B_{R_0},\\  \phi(x)=g(x), &x\in \partial B_{R_0}.\end{array}\right.$$
Applying  \eqref{t1aa} with $w=\phi$, we obtain
$$\sum_{k=1}^{N_j}\lambda_k^j\phi(x_k^j)=- \int_{\partial B_{R_0}}\partial_\nu v_j gd\sigma(x),\quad j=1,2$$
and combining this with \eqref{t1h}, we find
$$\int_{\partial B_{R_0}}(\partial_\nu v_1-\partial_\nu v_2) gd\sigma(x)=-\left(\sum_{k=1}^{N_1}\lambda_k^1\phi(x_k^1)-\sum_{k=1}^{N_2}\lambda_k^2\phi(x_k^2)\right)=0.$$
Since in this identity $g\in H^{3/2}(\partial B_{R_0})$ is arbitrary chosen, we deduce that $$\partial_\nu v_1(x)=\partial_\nu v_2(x),\quad x\in\partial B_{R_0} $$  and, applying Theorem \ref{t3}, we deduce that $N_1=N_2=N$ and, there exists a one-to-one map $\sigma$ of $\{1,\ldots,N\}$ such that 
\bel{tt}x_k^1=x_{\sigma(k)}^2,\quad \lambda_k^1=\lambda_{\sigma(k)}^2,\quad k=1,\ldots,N.\ee
Now recalling that $r^j_k=r$, $j=1,2$ and $k=1,\ldots,N$, we deduce that $$|B_r|(b_k^1)^{-2}-b_0^{-2})=\lambda_k^1= \lambda_{\sigma(k)}^2=|B_r|(b_{\sigma(k)}^2)^{-2}-b_0^{-2}) ,\quad k=1,\ldots,N,$$ which implies that $x_k^1=x_{\sigma(k)}^2$ and $b_k^1=b_{\sigma(k)}^2$. From this last condition, we easily deduce that $c_1=c_2$. Combining this with \cite[Theorem 8.1]{KU}, we deduce that we have also $f_1=f_2$ which completes the proof of the theorem. \qed
\ \\

\textbf{Proof of Corollary \ref{cc1}.} Repeating the arguments used in the proof of Theorem \ref{t1}, we obtain $N_1=N_2=N$ and \eqref{tt} holds true with $\lambda_k^j=(b_k^j)^{-2}-b_0^{-2})|B_{r_k^j}|$, $k=1,\ldots,N$ and $j=1,2$. In order to complete the proof, we need to show that the map $\sigma$ appearing in \eqref{tt} is necessarily the identity map when $N\geq2$. We will prove this last statement by contradiction. Assuming the contrary, we may find $k_1, k_2\in \{1,\ldots,N\}$ such that $k_1<k_2$ and $\sigma(k_2)<\sigma(k_1)$. Then, applying \eqref{t1b} and \eqref{tt},  we deduce that
$$x_{k_2}^1=x_{\sigma(k_2)}^2\mathcal R x_{\sigma(k_1)}^2=x_{k_1}^1.$$
It follows that $x_{k_2}^1\mathcal R x_{k_1}^1$ and, applying again \eqref{t1b}, we have $x_{k_1}^1\mathcal R x_{k_2}^1$. Recalling that $\mathcal R$ is a partial order binary relation  on the set $\{x_1^1,\ldots,x_{N_1}^1\}\cup\{x_1^2,\ldots,x_{N_2}^2\} $, we obtain $x_{k_1}^1= x_{k_2}^1$ which contradicts the fact that 
$$B(x_{k_1}^1,r_{k_1}^1)\cap B(x_{k_2}^1,r_{k_2}^1)=\emptyset.$$
This proves that the one-to-one map $\sigma$ of $\{1,\ldots,N\}$ appearing in \eqref{tt} is necessary the identity map and we have  
\begin{equation}\label{t1i}x_k^1=x_k^2,\quad (b_k^1)^{-2}-b_0^{-2})|B_{r_k^1}|=\lambda_k^1=\lambda_k^2=(b_k^2)^{-2}-b_0^{-2})|B_{r_k^2}|,\quad k=1,\ldots,N.\end{equation}
On the other hand, in view of  \eqref{t1a}, we have either
$b_k^1=b_k^2$ or $|B_{r_k^1}|=|B_{r_k^2}|$. Then, condition \eqref{t1i} implies that
$$x_k^1=x_k^2,\quad b_k^1=b_k^2,\quad r_k^1=r_k^2,\quad k=1,\ldots,N,$$
which clearly implies that $c_1=c_2$. Combining this with \cite[Theorem 8.1]{KU}, we deduce that we have also $f_1=f_2$ which completes the proof of the corollary.\qed

\section{Proof of of Theorem \ref{t2}}\label{sect:5}

This section is devoted to the proof of Theorem \ref{t2}.  In a similar way to Theorem \ref{t1} we know that, for any $\phi\in H^2(B_{R_0})$ satisfying $-\Delta\phi=0$ in $B_{R_0}$, \eqref{t1f} holds true.
On the other hand, in view of \eqref{cond1}-\eqref{cond3}, we have
$$ \begin{aligned}\int_{\Omega}c^{-2}_j\phi dx&=b_0^{-2} \int_{\Omega}\phi dx+((b_1)^{-2}-b_0^{-2}))\int_{\omega^j}\phi(x)dx\\
\ &=b_0^{-2} \int_{\Omega}\phi dx+((b_1)^{-2}-b_0^{-2}))\left(\sum_{k=1}^{m_j}\int_{B(x_p^j,r_p^j)}\phi(x)dx-\sum_{\ell=1}^{n_j}\int_{B(y_\ell^j,s_\ell^j)}\phi(x)dx\right)\end{aligned}$$
and applying the mean value theorem for harmonic functions, we find
$$ \int_{\Omega}c^{-2}_j\phi dx=b_0^{-2} \int_{\Omega}\phi dx+((b_1)^{-2}-b_0^{-2}))\left(\sum_{k=1}^{m_j}|B_{r_k^j}|\phi(x_k^j)-\sum_{\ell=1}^{n_j}|B_{s_\ell^j}|\phi(y_\ell^j)\right).$$
Combining this with \eqref{t1f} using the fact that $b_1\neq b_0$, for any $\phi\in H^2(B_{R_0})$ satisfying $-\Delta\phi=0$ in $B_{R_0}$, we obtain
\bel{t2e}\left(\sum_{k=1}^{m_1}|B_{r_k^1}|\phi(x_k^1)-\sum_{\ell=1}^{n_1}|B_{s_\ell^1}|\phi(y_\ell^1)\right)-\left(\sum_{k=1}^{m_2}|B_{r_k^2}|\phi(x_k^2)-\sum_{\ell=1}^{n_2}|B_{s_\ell^2}|\phi(y_\ell^2)\right)=0.\ee
For $j=1,2$, let us consider $z_1^j,\ldots,z_{m_j+n_j}^j\in\Omega$ and $\lambda_1^j,\ldots,\lambda_{m_j+n_j}^j\in\R$ such that
$$z_k^j=x_k^j,\quad z_{m_j+\ell}^j=y_{\ell}^j,\quad \lambda_k^j=|B_{r_k^j}|,\quad \lambda_{m_j+\ell}^j=-|B_{s_\ell^j}|,\quad k=1,\ldots,m_j,\ \ell=1,\ldots,n_j.$$
Applying \eqref{t2e}, for any $\phi\in H^2(B_{R_0})$ satisfying $-\Delta\phi=0$ in $B_{R_0}$, we get
\bel{t2f}\sum_{k=1}^{m_1+n_1}\lambda_k^1\phi(z_k^1)-\sum_{k=1}^{m_2+n_2}\lambda_k^2\phi(z_k^2)=0.\ee
For $j=1,2$, let us consider $v_j\in L^2(\Omega)$, $j=1,2$, the solution in the transposition sense of the boundary value problem
\begin{equation}\label{t2g}\left\{\begin{array}{ll}-\Delta v_j=\sum_{k=1}^{m_j+n_j}\lambda_k^j\delta_{z_k^j},\quad &\textrm{in}\ B_{R_0},\\  v_j=0, &\textrm{on }\partial B_{R_0}.\end{array}\right.\end{equation}
In a similar way to the proof of Theorem \ref{t1}, we deduce that \eqref{t2f} implies that
 $$\partial_\nu v_1(x)=\partial_\nu v_2(x),\quad x\in\partial B_{R_0} $$  and, applying Theorem \ref{t3}, we deduce that $m_1+n_1=m_2+n_2=N$ and, there exists a one-to-one map $\sigma$ of $\{1,\ldots,N\}$ such that 
\bel{t2h}z_k^1=z_{\sigma(k)}^2,\quad \lambda_k^1=\lambda_{\sigma(k)}^2,\quad k=1,\ldots,N.\ee
Recalling that 
$$\lambda_k^j=|B_{r_k^j}|>0,\quad  \lambda_{m_j+\ell}^j=-|B_{s_\ell^j}|<0,\quad k=1,\ldots,m_j,\ \ell=1,\ldots,n_j,$$
and using \eqref{t2h}, we deduce that $\sigma(\{1,\ldots,m_1\})=\{1,\ldots,m_2\}$ and $\sigma(\{m_1+1,\ldots,n_1+m_1\})=\{m_2+1,\ldots,m_2+n_2\}$. Therefore, we have $m_1=m_2=m$, $n_1=n_2=n$, $\sigma_1= \sigma|_{\{1,\ldots,m\}}$ is a  one-to-one map  of $\{1,\ldots,m\}$ and $\sigma_2= \sigma|_{\{m+1,\ldots,m+n\}}$ is a  one-to-one map  of $\{m+1,\ldots,m+n\}$. Combining this with \eqref{t2h}, we get
$$x_k^1=z_k^1=z_{\sigma_1(k)}^2=x_{\sigma_1(k)}^2,\quad |B_{r_k^1}|=\lambda_k^1=\lambda_{\sigma_1(k)}^2=|B_{r_{\sigma_1(k)}^2}|,\quad k=1,\ldots,m,$$
$$y_\ell^1=z_{m+\ell}^1=z_{\sigma_2(m+\ell)}^2=y_{\sigma_2(m+\ell)-m}^2,\quad -|B_{s_\ell^1}|=\lambda_{m+\ell}^1=\lambda_{\sigma_2(m+\ell)}^2=-|B_{s_{\sigma_2(m+\ell)-m}^2}|,\quad \ell=1,\ldots,n.$$
It follows that
$$x_k^1=x_{\sigma_1(k)}^2,\quad r_k^1=r_{\sigma_1(k)}^2,\quad y_\ell^1=y_{\sigma_2(m+\ell)-m}^2,\quad s_\ell^1=s_{\sigma_2(m+\ell)-m}^2,\quad k=1,\ldots,m,\ \ell=1,\ldots,n,$$
which clearly implies that $\omega^1=\omega^2$ and $c_1=c_2$. Combining this with \cite[Theorem 8.1]{KU}, we deduce that we have also $f_1=f_2$ which completes the proof of the theorem.

\section{Proof of Theorem \ref{t4}}\label{sect:6}

We start by proving Theorem \ref{t4} under the assumption that for all $j=1,2$ and $k=1,\ldots,N_j$, we have $x_k^j\neq y_k^j$. Without loss of generality we may assume that $N_1\leq N_2$ which implies that $|\{x_1^1,\ldots,x_{N_1}^1,y_1^1,\ldots,y_{N_1}^1\}|=2N_1$.
In a similar way to Theorem \ref{t1} we know that, for any $\phi\in H^2(B_{R_0})$ satisfying $-\Delta\phi=0$ in $B_{R_0}$, \eqref{t1f} holds true. Moreover,  \eqref{om2} implies
$$ \begin{aligned}\int_{\Omega}c^{-2}_j\phi dx&=b_0^{-2} \int_{\Omega}\phi dx+\sum_{k=1}^{N_j}((b_k^j)^{-2}-b_0^{-2}))\int_{\omega_k^j}\phi(x)dx\\
\ &=b_0^{-2} \int_{\Omega}\phi dx+\sum_{k=1}^{N_j}((b_k^j)^{-2}-b_0^{-2}))\left(\int_{B(x_k^j,r)}\phi(x)dx-\int_{B(y_k^j,s)}\phi(x)dx\right)\end{aligned}$$
and applying the mean value theorem for harmonic functions, we find
$$ \int_{\Omega}c^{-2}_j\phi dx=b_0^{-2} \int_{\Omega}\phi dx+\sum_{k=1}^{N_j}((b_k^j)^{-2}-b_0^{-2}))\left(|B_{r}|\phi(x_k^j)-|B_{s}|\phi(y_k^j)\right).$$
It follows that 
\bel{t4c} \sum_{k=1}^{N_1}((b_k^1)^{-2}-b_0^{-2}))\left(|B_{r}|\phi(x_k^1)-|B_{s}|\phi(y_k^1)\right)-\sum_{k=1}^{N_2}((b_k^2)^{-2}-b_0^{-2}))\left(|B_{r}|\phi(x_k^2)-|B_{s}|\phi(y_k^2)\right)=0.\ee
For $j=1,2$, let us consider $z_1^j,\ldots,z_{2N_j}^j\in\Omega$ and $\lambda_1^j,\ldots,\lambda_{2N_j}^j\in\R$ such that
$$z_k^j=x_k^j,\quad z_{N_j+\ell}^j=y_{\ell}^j,\quad \lambda_k^j=((b_k^j)^{-2}-b_0^{-2}))|B_{r}|,\quad \lambda_{N_j+\ell}^j=-((b_k^j)^{-2}-b_0^{-2})|B_{s}|,\quad k,\ell=1,\ldots,N_j.$$
Applying \eqref{t4c}, for any $\phi\in H^2(B_{R_0})$ satisfying $-\Delta\phi=0$ in $B_{R_0}$, we get
\bel{t4d}\sum_{k=1}^{2N_1}\lambda_k^1\phi(z_k^1)-\sum_{k=1}^{2N_2}\lambda_k^2\phi(z_k^2)=0.\ee
For $j=1,2$, let us consider $v_j\in L^2(\Omega)$, $j=1,2$, the solution in the transposition sense of the boundary value problem
$$\left\{\begin{array}{ll}-\Delta v_j=\sum_{k=1}^{2N_j}\lambda_k^j\delta_{z_k^j},\quad &\textrm{in}\ B_{R_0},\\  v_j=0, &\textrm{on }\partial B_{R_0}.\end{array}\right.$$
In a similar way to the proof of Theorem \ref{t1}, we deduce that \eqref{t4d} implies that
 $$\partial_\nu v_1(x)=\partial_\nu v_2(x),\quad x\in\partial B_{R_0} $$  and, applying Theorem \ref{t3}, we deduce that 
$$2N_1=|\{x_1^2,\ldots,x_{N_2}^1,y_1^2,\ldots,y_{N_2}^2\}|\leq 2N_2$$
and, there exists a one-to-one map $\sigma$ of $\{1,\ldots,2N_1\}$ such that 
$$z_k^1=z_{\sigma(k)}^2,\quad \lambda_k^1=\lambda_{\sigma(k)}^2,\quad k=1,\ldots,2N_1.$$
In addition, recalling that, for all $k,\ell=1,\ldots,N_j$, we have
\bel{tete}\lambda_k^j=((b_k^j)^{-2}-b_0^{-2})^2|B_{r}|>0,\quad  \lambda_{N_j+\ell}^j=-((b_k^j)^{-2}-b_0^{-2})^2|B_{s}|<0,\ee
we deduce that 
$$\{x_1^1,\ldots,x_{N_1}^1\}=\{z_1^1,\ldots,z_{N_1}^1\}=\{z_1^2,\ldots,z_{N_2}^2\}=\{x_1^2,\ldots,x_{N_2}^2\},$$
$$\{y_1^1,\ldots,y_{N_1}^1\}=\{z_{N_1+1}^1,\ldots,z_{2N_1}^1\}=\{z_{N_2+1}^2,\ldots,z_{2N_2}^2\}=\{y_1^2,\ldots,y_{N_2}^2\}.$$
It follows that $N=N_1=N_2$ and there exists $\sigma_1$, $\sigma_2$ two one-to-one map  of $\{1,\ldots,N\}$ such that
$$x_k^1=x_{\sigma_1(k)}^2,\quad \lambda_k^1=\lambda_{\sigma_1(k)}^2,\quad y_k^1=y_{\sigma_2(k)}^2,\quad \lambda_{N+k}^1=\lambda_{N+\sigma_2(k)}^2,\quad k=1,\ldots,N.$$
Combining this with \eqref{tete}, we deduce that $\sigma_1=\sigma_2$ and it follows that 
$$x_k^1=x_{\sigma_1(k)}^2,\quad b_k^1=b_{\sigma_1(k)}^2,\quad y_k^1=y_{\sigma_1(k)}^2,\quad k=1,\ldots,N$$
which clearly  implies that $c_1=c_2$. Combining this with \cite[Theorem 8.1]{KU}, we deduce that we have also $f_1=f_2$ which completes the proof of the theorem.

Now let us assume that there exist $k_0\in\{1,\ldots,N\}$ and $j_0\in\{1,2\}$ such that $x_{k_0}^{j_0}=y_{k_0}^{j_0}$. Repeating the above argumentation, we can show that there exists $k_1,k_2\in \{1,\ldots,N\}$ $x_{k_1}^{1}=y_{k_1}^{1}=x_{k_2}^{2}=y_{k_2}^{2}$. Therefore, repeating the above argumentation, we can prove that $c_1=c_2$ and we can complete the proof of the theorem.

\section{Stability estimates}\label{sect:7}

This section is devoted to the stability estimates stated in  Theorem \ref{t8}. We start with the following improved regularity result for solutions of \eqref{eq1}.

\begin{lem}\label{l6} Let $f\in H^3(\R^3)$ and  $c$ be a piecewise constant wave speed coefficient of the form \eqref{c}. Then, for any $s\in(0,\frac 1 2)$, the solution $u$ of \eqref{eq1} is lying in $C^2([0,+\infty);H^{s}(\R^3))$ and, for all $T>0$,
\bel{l6a}\norm{u}_{C^2([0,T];H^{s}(\R^3))}\leq C\norm{f}_{H^3(\R^3)},\ee
with $C>0$ depending on $s$, $M_1\geq \norm{c}_{L^\infty(\R^3)}+\norm{c^{-1}}_{L^\infty(\R^3)}$,  $R_0$ and $T$.\end{lem}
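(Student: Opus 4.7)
The plan is to realize the solution through the functional calculus of the self-adjoint non-negative operator $L=-c^2\Delta$ on $L^2(\R^3;c^{-2}dx)$ introduced in the proof of Theorem~\ref{l111}, and then to exploit a multiplication property of piecewise constant functions on Sobolev spaces below the trace threshold. Writing $u(t,\cdot)=\cos(t\sqrt{L})f$, the three quantities to estimate take the forms
\begin{equation*}
u=\cos(t\sqrt{L})f,\qquad \partial_t u=-\sin(t\sqrt{L})\sqrt{L}\,f,\qquad \partial_t^2 u=-\cos(t\sqrt{L})\,Lf,
\end{equation*}
and both $\cos(t\sqrt{L})$ and $\sin(t\sqrt{L})$ commute with $L^{\alpha/2}$ and act as contractions on every domain $D(L^{\alpha/2})$ in its graph norm. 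It is therefore enough to place $f$, $\sqrt{L}f$, and $Lf$ in $D(L^{s/2})$ with norms controlled by $\|f\|_{H^3(\R^3)}$.

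The first ingredient is the identification $D(L^{\alpha/2})=H^{\alpha}(\R^3)$ with equivalent norms for $\alpha\in[0,2]$, the equivalence constants depending only on $M_1$. At $\alpha=0$ and $\alpha=1$ this follows from the equivalence $L^2(\R^3;c^{-2}dx)\simeq L^2(\R^3)$ and from the identity $\langle Lv,v\rangle_{L^2(c^{-2}dx)}=\int_{\R^3}|\nabla v|^2\,dx$ valid for $v\in H^1(\R^3)$; at $\alpha=2$ one uses $D(L)=\{v\in L^2(\R^3):c^2\Delta v\in L^2(\R^3)\}=H^2(\R^3)$, since $c^{\pm 1}\in L^{\infty}(\R^3)$; the intermediate values follow by complex interpolation. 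Granting this, $\|f\|_{D(L^{s/2})}\leq C\|f\|_{H^3}$ and $\|\sqrt{L}f\|_{D(L^{s/2})}\leq C\|f\|_{H^{s+1}}\leq C\|f\|_{H^3}$ are immediate, and it only remains to show that $Lf=-c^2\Delta f$ belongs to $H^s(\R^3)$ with norm controlled by $\|f\|_{H^3}$.

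This is the delicate step and the heart of the argument. Using the decomposition $c^2=b_0^2+\sum_{k=1}^N(b_k^2-b_0^2)\mathds{1}_{B(x_k,r_k)}$, it reduces to the multiplication bound
\begin{equation*}
\|\mathds{1}_{B}\,g\|_{H^s(\R^3)}\leq C\|g\|_{H^s(\R^3)},\qquad 0\leq s<1/2,
\end{equation*}
valid for every ball $B\subset \R^3$ and every $g\in H^s(\R^3)$. This is a classical fact: below the trace threshold $s=1/2$, extension by zero is continuous from $H^s(B)$ into $H^s(\R^3)$ (equivalently $H^s_0(B)=H^s(B)$), which is the dual statement to the claimed multiplication bound. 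A direct verification proceeds from the Slobodeckii seminorm together with the fractional Hardy-type inequality $\int_B|g(y)|^2\,\mathrm{dist}(y,\partial B)^{-2s}\,dy\leq C\|g\|_{H^s(\R^3)}^2$, valid precisely for $s<1/2$ with $C$ depending on $s$ and the Lipschitz character of $\partial B$. Applying this to $g=\Delta f\in H^1(\R^3)\hookrightarrow H^s(\R^3)$ yields $c^2\Delta f\in H^s(\R^3)$ with $\|c^2\Delta f\|_{H^s}\leq C\|f\|_{H^3}$.

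Assembling these pieces, the strong continuity of $\cos(t\sqrt{L})$ and $\sin(t\sqrt{L})$ on $L^2(\R^3;c^{-2}dx)$, combined with their commutation with $L^{s/2}$, entails that $u$, $\partial_t u$, and $\partial_t^2 u$ all belong to $C([0,T];H^s(\R^3))$ with the bound \eqref{l6a}. The main obstacle is the multiplication estimate above, which breaks down at $s=1/2$ since $\mathds{1}_B$ then possesses a nontrivial trace on $\partial B$; this is precisely the structural obstruction preventing a stronger statement such as $u\in C^2([0,T];H^1(\R^3))$, and it reflects the intrinsic loss of smoothness of solutions to \eqref{eq1} across the inclusion interfaces of piecewise constant sound speeds.
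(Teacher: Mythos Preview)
Your proof is correct and shares with the paper the one genuinely non-trivial ingredient: the multiplier bound
\[
\|\mathds{1}_{B}\,g\|_{H^{s}(\R^3)}\leq C\|g\|_{H^{s}(\R^3)},\qquad 0<s<\tfrac12,
\]
which is exactly what the paper invokes (via \cite[Theorem 11.4, Chapter I]{LM1}) to place $c^2\Delta f$ in $H^s$ with $\|c^2\Delta f\|_{H^s}\leq C\|f\|_{H^3}$.

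Where the two arguments differ is in how this is upgraded to the $C^2([0,T];H^s)$ bound on $u$. The paper sets $v=\partial_t u$, observes that $v$ solves \eqref{eq1} with data $(0,c^2\Delta f)$, appeals to the Lions--Magenes well-posedness estimates for the wave equation at the integer levels $g\in H^\ell$, $\ell=0,1$, and then interpolates the solution map $g\mapsto v$ between these two endpoints to reach $H^s$. You instead work directly with the functional calculus representation $u(t)=\cos(t\sqrt{L})f$, reduce everything to placing $f$, $\sqrt{L}f$ and $Lf$ in $D(L^{s/2})$, and invoke the identification $D(L^{\alpha/2})=H^\alpha(\R^3)$ for $\alpha\in[0,2]$ (itself obtained by interpolating between the known endpoints $\alpha=0,1,2$). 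Both routes thus rest on an interpolation step; yours front-loads it into a structural statement about fractional domains, while the paper interpolates the solution operator a posteriori. Your presentation is arguably cleaner and makes the role of the threshold $s=1/2$ transparent; the paper's route avoids stating (and justifying) the domain identification but is otherwise equivalent in substance.
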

\begin{proof} Fix $s\in(0,\frac 1 2)$.
Let us first observe that, in view of Lemma \ref{l11} in the Appendix, problem \eqref{eq1} admits a unique solution $u\in C^2([0,+\infty);L^2(\R^3))\cap C^1([0,+\infty);H^1(\R^3))\cap C([0,+\infty);H^2(\R^3))$. Let us fix $v=\partial_tu$ and observe that $v$ solves the initial value problem
\begin{equation}\label{eq11}\left\{\begin{array}{ll}c^{-2}(x)\partial_t^2v-\Delta v=0,\quad &\textrm{in}\ \R_+\times\R^3,\\  v(0,x)=0,\quad \partial_tv(0,x)=g(x),&x\in\R^3,\end{array}\right.\end{equation}
with $g=c^2\Delta f$. Since $f\in H^3(\R^3)$, applying \cite[Theorem 11.4, Chapter I]{LM1}, we deduce that $c^2\Delta f\in H^{s}(\R^3)$ and
\bel{l6b} \begin{aligned}\norm{c^2\Delta f}_{H^{s}(\R^3)}&\leq b_0\norm{\Delta f}_{H^{s}(\R^3)}+\sum_{k=1}^N|b_k-b_0|\norm{\Delta f\mathds{1}_{B(x_k,r_k)}}_{H^{s}(\R^3)}\\
&\leq C\norm{\Delta f}_{H^{s}(\R^3)}\leq C\norm{\Delta f}_{H^{1}(\R^3)}\leq C\norm{f}_{H^3(\R^3)},\end{aligned}\ee
with $C>0$ depending only on $M_1$ and $R_0$. On the other hand,  for $g\in H^\ell(\R^3)$, $\ell=0,1$, applying Lemma \ref{l11} of the Appendix combined with the estimates in \cite[Theorem 8.1, Chapter 3]{LM1} and \cite[Theorem 8.2, Chapter 3]{LM1}, we have $v\in C^{1}([0,+\infty);H^\ell(\R^3))$ and, for all $T>0$, there exists $C>0$ depending on $M_1$ and $T$ such that
$$\norm{v}_{C^{1}([0,T];H^\ell(\R^3))}\leq C\norm{g}_{H^\ell(\R^3)}.$$
In view of \cite[Theorem 5.1, Chapter I]{LM1} and \cite[Theorem 7.1, Chapter I]{LM1}, by interpolation we deduce that , for $g\in H^{s}(\R^3)$,  we have $v\in C^{1}([0,+\infty);H^{s}(\R^3))$ and, for all $T>0$, there exists $C>0$ depending on $M_1$ and $T$ such that
$$\norm{v}_{C^{1}([0,T];H^{s}(\R^3))}\leq C\norm{g}_{H^{s}(\R^3)}.$$
Applying this result to $g=c^2\Delta f$, we deduce that $u\in C^2([0,+\infty);H^{s}(\R^3))$ and \eqref{l6a} holds true.

\end{proof}
Armed with this lemma we are now in position to complete the proof of Theorem \ref{t8}.\\

 \textbf{Proof of of Theorem \ref{t8}.}
 We start by proving \eqref{esti14}. Let us first observe that in view of Theorem \ref{l111}, the map $p\mapsto\widehat{u_j}(p,\cdot)|_{B_{R_0}}$, $j=1,2$, can be extended analytically to an element of $\mathcal H(\mathbb D_\delta;H^2(B_{R_0}))$. In view of Lemma \ref{l1}, for $j,k=1,2$, we have
$$\partial_p^k\widehat{u_j}(p,x)|_{p=0}=k!u_j^{(k)},$$
with
$$u_j^{(2)}(x)=D_j:=-\frac{1}{2\pi b_0}\int_{\R^3}\frac{f_j(y)}{c_j(y)^2}dy,\quad x\in B_{R_0},$$
$$-\Delta u_j^{(4)}(x)=-c^{-2}_ju_1^{(2)}(x),\quad x\in B_{R_0}.$$
Fixing $u^{(4)}=u_1^{(4)}-u_2^{(4)}\in H^2(B_{R_0})$, we observe that $u^{(4)}$ solves the equation
$$-\Delta u^{(4)}(x)=c^{-2}_2u_2^{(2)}(x)-c^{-2}_1u_1^{(2)}(x)=(D_2-D_1)c_2^{-2}(x)+D_1(c^{-2}_2-c^{-2}_1)(x),\quad x\in B_{R_0}.$$
Fixing $v\in H^2(\Omega)$ a solution of $\Delta v=0$ on $\Omega$, multiplying the above equation by $v$, integrating by parts and applying the mean value theorem, we find
$$\begin{aligned}&-\int_{\partial \Omega}\partial_\nu u^{(4)}vd\sigma(x)+\int_{\partial \Omega} u^{(4)}\partial_\nu vd\sigma(x)\\
&=(D_2-D_1)\int_{\Omega }c^{-2}_{2}(x)v(x)dx+D_1\int_{\Omega }(c^{-2}_2-c^{-2}_1)vdx\\
&=(D_2-D_1)\int_{\Omega }c^{-2}_{2}(x)v(x)dx+D_1(b_1^2)^{-2}\int_{B(x_1^2, r_1)}vdx-D_1(b_1^1)^{-2}\int_{B(x_1^1, r_1)}vdx\\
&=(D_2-D_1)\int_{\Omega }c^{-2}_{2}(x)v(x)dx+D_1|B_{r_1}|((b_1^2)^{-2}v(x^2_1)-(b_1^1)^{-2}v(x^1_1)).\end{aligned}$$
Using the definition of $D_j$ and $u_j^{(4)}$, $j=1,2$, we obtain
\bel{esti8}\begin{aligned}&\abs{(b_1^1)^{-2}v(x^1_1)-(b_1^2)^{-2}v(x^2_1)}\\
&\leq C \left(\sum_{k=1}^{2}\sum_{\ell=0}^1\norm{\partial_p^{2k} \partial_\nu^\ell\widehat{u_1}(p,\cdot)|_{p=0}-\partial_p^{k} \partial_\nu^\ell\widehat{u_2}(p,\cdot)|_{p=0}}_{L^2(\partial \Omega)}\right)\norm{v}_{H^2(\Omega )},\end{aligned}\ee
with $C>0$ depending on $m$, $R_0$ and $M$.
For $\ell=1,2,3$, fix $v_\ell(x)=x_\ell-(x^1_1)_\ell$, $x=(x_1,x_2,x_3)\in \Omega $, with $x^j_1=((x_1^j)_1,(x_1^j)_2,(x_1^j)_3)$, $j=1,2$. Then, choosing $v=v_\ell$, $\ell=1,2,3$, in \eqref{esti8}, we obtain
$$(b_1^2)^{-2}\abs{(x_1^2)_\ell^2-(x_1^1)_\ell}\leq C \left(\sum_{k=1}^{2}\sum_{\ell=0}^1\norm{\partial_p^{2k} \partial_\nu^\ell\widehat{u_1}(p,\cdot)|_{p=0}-\partial_p^{k} \partial_\nu^\ell\widehat{u_2}(p,\cdot)|_{p=0}}_{L^2(\partial \Omega)}\right)$$
which implies that
\bel{esti9}\abs{x^2_1-x^1_1}\leq C \left(\sum_{k=1}^{2}\sum_{\ell=0}^1\norm{\partial_p^{2k} \partial_\nu^\ell\widehat{u_1}(p,\cdot)|_{p=0}-\partial_p^{k} \partial_\nu^\ell\widehat{u_2}(p,\cdot)|_{p=0}}_{L^2(\partial \Omega)}\right),\ee
with $C>0$ depending on  $m$, $M$ and $R_0$.
Now let us assume that $x_1^1\neq x_1^2$ and let  us consider $\omega_1=\frac{x^1-x^2}{|x^1-x^2|}$, $\omega_2\in \mathbb S^{2}$ such that $\omega_2\cdot\omega_1=0$ and $\omega_3=\omega_1\times\omega_2$. Then, choosing $v(x)=\exp((x-x^1_1)\cdot\omega_1+i(x-x^1_1)\cdot\omega_2)$ in \eqref{esti8}, we obtain
\bel{esti10}\begin{aligned}\abs{b_1^1-b_1^2}&=\frac{(b_1^1)^{2}(b_1^2)^{2}}{b^1_1+b_1^2}\abs{(b_1^1)^{-2}v(x^1)-(b_1^2)^{-2}v(x^2)}\\
&\leq C \left(\sum_{k=1}^{2}\sum_{\ell=0}^1\norm{\partial_p^{2k} \partial_\nu^\ell\widehat{u_1}(p,\cdot)|_{p=0}-\partial_p^{k} \partial_\nu^\ell\widehat{u_2}(p,\cdot)|_{p=0}}_{L^2(\partial \Omega)}\right).\end{aligned}\ee
Similarly, we can deduce \eqref{esti10} when $x_1^1= x_1^2$, which proves that \eqref{esti10}  is always true.
Thus, for all $1\leq q<\infty$, we obtain
$$\begin{aligned}&\norm{c_1-c_2}_{L^{q}(\Omega)}\\
&\leq \norm{(b^1_1-b_0)\mathds{1}_{B(x^1,r)\setminus B(x^2,r)}}_{L^{q}(\Omega)}+\norm{(b^1_1-b^2_1)\mathds{1}_{B(x^1,r)\cap B(x^2,r)}}_{L^{q}(\Omega)}+\norm{(b^1_1-b_0)\mathds{1}_{B(x^2,r)\setminus B(x^1,r)}}_{L^{q}(\Omega)}\\
&\leq |b^1-b_0||B(x^1,r)\setminus B(x^2,r)|^{\frac{1}{q}}+|b^1-b^2||B(x^1,r)\cap  B(x^2,r)|^{\frac{1}{q}}+|b^2-b_0||B(x^1,r)\setminus B(x^2,r)|^{\frac{1}{q}}\\
&\leq C(|B(x^1,r)\setminus B(x^2,r)|^{\frac{1}{q}}+|b^1-b^2|+|B(x^1,r)\setminus B(x^2,r)|^{\frac{1}{q}}).\end{aligned}$$
with $C>0$ depending on  $M$, $R_0$.
Now let us observe that $B(x^1,r)\setminus B(x^2,r)\subset B(x^2,r+|x^1-x^2|)\setminus B(x^2,r)$ and
$$|B(x^1,r)\setminus B(x^2,r)|\leq |B(x^2,r+|x^1-x^2|)\setminus B(x^2,r)|=c((r+|x^1-x^2|)^3-r^3)\leq C|x^1-x^2|,$$
with $C>0$ depending only on $R_0$. In the same way, we have $|B(x^1,r)\setminus B(x^2,r)|\leq C|x^1-x^2|$ and, for all $1\leq q<\infty$, it follows
\bel{esti115} \norm{c_1-c_2}_{L^{q}(\Omega)}\leq C(|x^1-x^2|^{\frac{1}{q}}+|b^1-b^2|),\ee
with $C>0$ depending on $q$, $R_0$, $m$ and $M$. Applying \eqref{esti9}-\eqref{esti10}, we obtain \eqref{esti14}.

Fix $s\in(0,\frac{1}{2})$ and let us show the estimate \eqref{t8a}.
Applying estimate \eqref{esti115}, with $q=q_s=\frac{6}{3+2s}$, we obtain
\bel{esti15} \norm{c_1-c_2}_{L^{q_s}(\Omega )}\leq C\left(\sum_{k=1}^{2}\sum_{\ell=0}^1\norm{\partial_p^{2k} \partial_\nu^\ell\widehat{u_1}(p,\cdot)|_{p=0}-\partial_p^{k} \partial_\nu^\ell\widehat{u_2}(p,\cdot)|_{p=0}}_{L^2(\partial \Omega)}\right)^{\frac{1}{q_s}}.\ee
We fix $u=u_1-u_2$, $g=u_1-u_2$ on $[0,+\infty)\times\partial \Omega $ and $T>4R_0\frac{b_0}{m}$. Recalling that $u_1-u_2\in H^2((0,T)\times\R^3)$, supp$(f_j)\subset \Omega $ and applying \cite[Chapter I, Theorem 2.3]{LM2}, we can find $G\in H^2((0,T)\times \Omega )$ such that
\bel{esti16} G|_{(0,T)\times\partial \Omega }=g|_{(0,T)\times\partial \Omega },\quad G(0,\cdot)=\partial_tG(0,\cdot)\equiv0,\ee
\bel{esti18}\norm{G}_{H^2((0,T)\times \Omega )}\leq C(\|  u_1 -   u_2\|_{H^{\frac{3}{2}}((0, T)\times\partial \Omega   )}+\|  t^{-\frac{1}{2}}(\partial_t u_1 -   \partial_tu_2)\|_{L^2((0, T)\times\partial \Omega   )}).\ee 
Moreover, one can check that we can split $u|_{(0,T)\times\Omega}$ into two terms $u|_{(0,T)\times\Omega}=w+G$ where $w$ solves the initial  boundary value problem 
\begin{equation}\label{eq12}\left\{\begin{array}{ll}c_1^{-2}\partial_t^2w-\Delta_x w=-(c_1^{-2}\partial_t^2-\Delta)G(t,x)+(c^{-2}_2-c_1^{-2})\partial_t^2u_2,\quad &\textrm{in}\ (0,T)\times \Omega ,\\  w(0,\cdot)=f_1-f_2,\quad \partial_tw(0,\cdot)=0,\quad &\textrm{in}\ \Omega ,\\  w=0,\quad &\textrm{on}\ (0,T)\times\partial \Omega .\end{array}\right.\end{equation}
Let us now recall an observability inequality   for solutions $v$ of the following initial boundary value problem
\begin{equation}\label{eqq2}\left\{\begin{array}{ll}c_1^{-2}\partial_t^2v-\Delta v=F(t,x),\quad &\textrm{in}\ (0,T)\times\Omega,\\  v(0,\cdot)=v_0,\quad \partial_tv(0,\cdot)=v_1,\quad &\textrm{in}\ \Omega,\\  v=0,\quad &\textrm{on}\ (0,T)\times\partial\Omega,\end{array}\right.\end{equation}
with $F\in L^2((0,T)\times\Omega)$, $v_0\in H^1_0(\Omega)$, $v_1\in L^2(\Omega)$.
\begin{prop}\label{p4} \emph{(Proposition 4.1, \cite{KT})} There exists  $C>0$ depending on $\Omega$, $T$, $b_0$ and $m$, such that, for $v$ the solution of \eqref{eqq2} , we have 
\bel{pp3a} \int_\Omega [|v_1|^2+c_1^2|\nabla v_0|^2]dx\leq C\left(\int_0^T\int_{\partial\Omega}|\partial_\nu v(t,x)|^2d\sigma(x)dt +\int_0^T\int_{\Omega}|F(t,x)|^2dxdt\right).
\ee
\end{prop}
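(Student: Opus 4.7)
The plan is to prove the observability inequality \eqref{pp3a} via the classical multiplier (Hilbert Uniqueness Method) technique, adapted to handle the piecewise constant wave speed $c_1$. Since $c_1$ is constant outside the single ball $B(x_1^1, r_1^1)$ and takes a different constant value inside, any sufficiently regular solution of \eqref{eqq2} automatically satisfies the natural transmission conditions (continuity of $v$ and of $\partial_\nu v$) across $\partial B(x_1^1, r_1^1)$, and the proof must keep careful track of the interface terms that this discontinuity produces.

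First I would record the forward energy estimate: multiplying \eqref{eqq2} by $\partial_t v$ and integrating over $\Omega$ yields $\frac{d}{dt}\int_\Omega (c_1^{-2}|\partial_t v|^2 + |\nabla v|^2)\,dx = 2\int_\Omega F\,\partial_t v\,dx$, so Gronwall's inequality gives control of the energy on $[0,T]$ in terms of the initial energy and $\|F\|_{L^2((0,T)\times\Omega)}$. The content of \eqref{pp3a} is the reverse bound. To obtain it, I would apply the multiplier $q(x)\cdot\nabla v$ with $q(x)=x-x_0$, $x_0\notin \overline{\Omega}$ chosen so that $q\cdot\nu\geq \alpha>0$ on $\partial\Omega$. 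Multiplying \eqref{eqq2} by $q\cdot\nabla v$ and performing repeated integration by parts on $(0,T)\times\Omega$, separately inside and outside the inclusion, produces a Rellich-type identity whose leading boundary contribution is $\int_0^T\int_{\partial\Omega}(q\cdot\nu)|\partial_\nu v|^2\,d\sigma\,dt$; the remaining terms are the endpoint energies at $t=0,T$, a volume term proportional to the energy, a source term linear in $F$, and jump terms on $\partial B(x_1^1, r_1^1)$.

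The main obstacle is to control these interface jump terms, which in general carry either sign. To handle them, I would patch the multiplier so that $q$ is radial with respect to $x_1^1$ in a neighborhood of $\overline{B(x_1^1,r_1^1)}$ and coincides with $x-x_0$ outside a larger ball. This makes $q\cdot\nu$ a positive constant on the interface, and the jump contributions reduce to integrals of $[c_1^{-2}](|\partial_t v|^2 - c_1^2|\nabla_\tau v|^2)$ on the sphere, whose size is controlled by a standard trace argument together with the a priori bound $m\leq c_1\leq b_0$. The time condition $T>4R_0 b_0/m$, which represents twice the maximal crossing time of $B_{R_0}$ at minimal speed $m/b_0$, is then used via the classical absorption argument of the HUM method to bound the endpoint energy terms by time averages and recover coercivity on the full initial energy, yielding \eqref{pp3a}. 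The most delicate step, as is standard for transmission problems, is the rigorous sign and size analysis of the interface jump terms, which is exactly the technical content of the proof in [KT, Proposition 4.1] that we invoke here.
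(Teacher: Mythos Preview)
The paper does not reprove this inequality: it simply cites \cite[Proposition 4.1]{KT} (stated there with background speed $1$ and inner speed $<1$) and reduces the general case to it via the time rescaling $v_1(t,x)=v(t/b_0,x)$, which turns the operator $c_1^{-2}\partial_t^2-\Delta$ into $(c_1/b_0)^{-2}\partial_t^2-\Delta$ with inner speed $b_1^1/b_0<1$. Your proposal instead sketches the multiplier/HUM argument that presumably underlies the cited result. That is a legitimate route, but two points of your outline would not go through as written.

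First, choosing $q(x)=x-x_0$ with $x_0\notin\overline{\Omega}$ and then patching $q$ to be radial about $x_1^1$ near the inclusion is both unnecessary and problematic: for a bounded domain there is no exterior point with $q\cdot\nu>0$ on all of $\partial\Omega$, and a glued vector field generally destroys the algebraic identities (exact divergence computations) on which the Rellich estimate relies. In the transmission setting with a single spherical inclusion the standard and correct choice is simply $q(x)=x-x_1^1$, centered at the inclusion, so that $q$ is already radial on the interface and no patching is needed.

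Second, and more importantly, the interface contribution is not something one ``controls by a standard trace argument together with $m\le c_1\le b_0$''; it has a definite sign determined by the jump of $c_1^{-2}$ along the outward normal, and that sign is favorable precisely under the hypothesis $b_1^1<b_0$ stated in Theorem~\ref{t8}. You never invoke this monotonicity, yet without it the interface term sits on the wrong side of the inequality and the absorption argument collapses. So your sketch captures the general HUM philosophy but misses the two structural ingredients---the multiplier centered at $x_1^1$ and the sign condition $b_1^1<b_0$---that make the transmission version work, whereas the paper sidesteps all of this by direct citation plus rescaling.
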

This result was proved in \cite[Proposition 4.1]{KT} with $b_0=1$ and $b_1^1<1$. Nevertheless, using the fact that $b_1^1<b_0$, it can be easily extended to solutions of \eqref{eqq2}, by applying the result of \cite[Proposition 4.1]{KT} to the map
$$v_1(t,x)=v(t/b_0,x),\quad (t,x)\in[0,b_0T]\times\Omega$$
which solves the initial boundary value problem
$$\left\{\begin{array}{ll}\left(\frac{c_1}{b_0}\right)^{-2}\partial_t^2v_1-\Delta v_1=F(t/b_0,x),\quad &\textrm{in}\ (0,b_0T)\times\Omega,\\  v_1(0,\cdot)=v_0,\quad \partial_tv_1(0,\cdot)=b_0^{-1}v_1,\quad &\textrm{in}\ \Omega,\\  v_1=0,\quad &\textrm{on}\ (0,b_0T)\times\partial\Omega.\end{array}\right.$$
Applying Proposition \ref{p4} to the solution $w$ of \eqref{eq12}, we obtain the following estimate
$$\begin{aligned}&\norm{f_1-f_2}_{H^1(\Omega )}\\
&\leq C\left(\norm{-(c_1^{-2}\partial_t^2-\Delta)G+(c^{-2}_2-c_1^{-2})\partial_t^2u_2}_{L^2((0,T)\times \Omega )}+\norm{\partial_\nu w}_{L^2((0,T)\times\partial \Omega )}\right)\\
&\leq C\left(\norm{G}_{H^2((0,T)\times \Omega )}+\norm{(c^{-2}_2-c_1^{-2})\partial_t^2u_2}_{L^2((0,T)\times \Omega )}+\norm{\partial_\nu u-\partial_\nu G}_{L^2((0,T)\times\partial \Omega )}\right) \\
&\leq C\left(\norm{G}_{H^2((0,T)\times \Omega )}+\norm{(c^{-2}_2-c_1^{-2})\partial_t^2u_2}_{L^2((0,T)\times \Omega )}+\norm{\partial_\nu u}_{L^2((0,T)\times\partial \Omega )}+\underbrace{\norm{\partial_\nu G}_{L^2((0,T)\times\partial \Omega )}}_{\leq C\norm{G}_{H^2((0,T)\times \Omega )}}\right)\end{aligned}.$$
Combining this with Lemma \ref{l6}, \eqref{esti14}, \eqref{esti18} and the Sobolev embedding theorem, we obtain
$$\begin{aligned}&\norm{f_1-f_2}_{H^1(\Omega )}\\
&\leq C\left(\|  u_1 -   u_2\|_{H^{\frac{3}{2}}((0, T)\times\partial \Omega   )}+\|  t^{-\frac{1}{2}}(\partial_t u_1 -   \partial_tu_2)\|_{L^2((0, T)\times\partial \Omega   )}\right)\\
&\ \ \ \ +C(\norm{c_1-c_2}_{L^{q_s}(\Omega )}\norm{\partial_t^2u_2}_{L^\infty(0,T;L^{\frac{6}{3-2s}}( \R^3))}+\norm{\partial_\nu u_1-\partial_\nu u_2}_{L^2((0,T)\times\partial \Omega )})\\
&\leq C\left(\|  u_1 -   u_2\|_{H^{\frac{3}{2}}((0, T)\times\partial \Omega   )}+\|  t^{-\frac{1}{2}}(\partial_t u_1 -   \partial_tu_2)\|_{L^2((0, T)\times\partial \Omega   )}+\norm{c_1-c_2}_{L^{q_s}(\Omega )}\norm{u_2}_{C^2([0,T];H^{s}( \R^3))}\right)\\
&\ \ \ \ +C\norm{\partial_\nu u_1-\partial_\nu u_2}_{L^2((0,T)\times\partial \Omega )}\\
&\leq C\left(\|  u_1 -   u_2\|_{H^{\frac{3}{2}}((0, T)\times\partial \Omega   )}+\|  t^{-\frac{1}{2}}(\partial_t u_1 -   \partial_tu_2)\|_{L^2((0, T)\times\partial \Omega   )}+\norm{\partial_\nu u_1-\partial_\nu u_2}_{L^2((0,T)\times\partial \Omega )}\right)\\
&\ \ \ \ +C\left(\sum_{k=1}^{2}\sum_{\ell=0}^1\norm{\partial_p^{2k} \partial_\nu^\ell\widehat{u_1}(p,\cdot)|_{p=0}-\partial_p^{k} \partial_\nu^\ell\widehat{u_2}(p,\cdot)|_{p=0}}_{L^2(\partial \Omega)}\right)^{\frac{3+2s}{6}},\end{aligned}$$
with $C>0$ depending on $T$, $s$, $b_0$, $\Omega$, $m$, $M$ and $R_0$.
Form this estimate we deduce \eqref{t8a} which completes the proof of Theorem \ref{t8}.\qed

\section{Appendix}
\subsection{Well-posedness of problem \eqref{eq1}}

In this section we consider the well-posedness of the problem 
\begin{equation}\label{eqqq1}
\left\{
\begin{array}{ll}
c^{-2}(x)\partial_t^2u(t, x) - \Delta u(t, x) = 0, & (t, x)\in \mathbb{R}_+ \times \mathbb{R}^3, \\
u(0,x) = f(x), \quad \partial_tu(0,x) = g(x), & x \in \mathbb{R}^3.
\end{array}
\right.
\end{equation}

Our result can be stated as follows.

\begin{lem}\label{l11} Let $c \in L^\infty(\mathbb{R}^3)$ be a strictly positive piecewise constant function such that \eqref{ccc} is fulfilled and $c$ is constant on $\mathbb{R}^3 \setminus \overline{\Omega}$. For $f\in H^1(\R^3)$ and $g\in L^2(\R^3)$,  \eqref{eqqq1} admits a unique solution $u\in  C^1([0,+\infty);L^2(\R^3))\cap  C([0,+\infty);H^1(\R^3))$. Moreover, 
for $f \in H^2(\mathbb{R}^3)$ and $g\in H^1(\R^3)$, the
 problem \eqref{eqqq1} admits a unique solution $u\in  C^2([0,+\infty);L^2(\R^3))\cap C^1([0,+\infty);H^1(\R^3))\cap C([0,+\infty);H^2(\R^3))$. 
\end{lem}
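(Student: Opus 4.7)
\textbf{Proof plan for Lemma \ref{l11}.}

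The approach is semigroup-theoretic. First I would endow $\mathcal{H}:=L^2(\mathbb{R}^3;c^{-2}dx)$ with the weighted inner product $\langle v,w\rangle_{\mathcal{H}}=\int_{\mathbb{R}^3}vw\,c^{-2}\,dx$, noting that \eqref{ccc} and $c\in L^\infty(\mathbb R^3)$ make this norm equivalent to the standard $L^2$-norm. On $\mathcal{H}$ I define the unbounded operator $L=-c^2\Delta$ with domain $D(L)=\{v\in L^2(\mathbb{R}^3):\Delta v\in L^2(\mathbb{R}^3)\}=H^2(\mathbb{R}^3)$ (the last equality by elliptic regularity in the whole space). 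I then recast \eqref{eqqq1} as the abstract second-order Cauchy problem $u''(t)+Lu(t)=0$, $u(0)=f$, $u'(0)=g$, and as the equivalent first-order system on the energy space $\mathcal{E}=D(L^{1/2})\times\mathcal{H}$ via $U=(u,\partial_tu)$ and the generator $\mathcal{A}=\begin{pmatrix}0&I\\-L&0\end{pmatrix}$ with domain $D(\mathcal{A})=D(L)\times D(L^{1/2})$.

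The core technical point is to verify that $L$ is self-adjoint and non-negative on $\mathcal{H}$. Symmetry and non-negativity follow from integration by parts: for $v,w\in C^\infty_0(\mathbb{R}^3)$,
\[
\langle Lv,w\rangle_{\mathcal{H}}=\int_{\mathbb{R}^3}(-c^2\Delta v)\,w\,c^{-2}\,dx=\int_{\mathbb{R}^3}\nabla v\cdot\nabla w\,dx=\langle v,Lw\rangle_{\mathcal{H}},
\]
and this extends to $D(L)=H^2(\mathbb{R}^3)$ by density. For self-adjointness, since $L$ is symmetric and non-negative, it suffices to show that $L+I$ is surjective onto $\mathcal{H}$; equivalently, given $h\in L^2$, the equation $-c^2\Delta u+u=h$, i.e.\ $-\Delta u+c^{-2}u=c^{-2}h$, admits an $H^2(\mathbb{R}^3)$ solution. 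This follows from Lax-Milgram applied to the coercive and bounded bilinear form $a(u,v)=\int\nabla u\cdot\nabla v+\int c^{-2}uv\,dx$ on $H^1(\mathbb{R}^3)$ (coercivity uses \eqref{ccc}), combined with the elliptic regularity $\Delta u=c^{-2}(u-h)\in L^2$, which gives $u\in H^2(\mathbb{R}^3)$. After verifying self-adjointness, Stone's theorem (or the equivalent result for the second-order Cauchy problem associated with a non-negative self-adjoint operator) supplies a strongly continuous unitary group $(S(t))_{t\in\mathbb{R}}$ on $\mathcal{E}$ generated by $\mathcal{A}$, and the energy identity follows from functional calculus.

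With $L^{1/2}$ defined via the spectral theorem, $D(L^{1/2})$ coincides with $H^1(\mathbb{R}^3)$, with equivalent norm, since $\langle L^{1/2}v,L^{1/2}v\rangle_{\mathcal{H}}=\langle Lv,v\rangle_{\mathcal{H}}=\|\nabla v\|_{L^2(\mathbb{R}^3)}^2$ on the core $H^2(\mathbb{R}^3)$. Therefore $\mathcal{E}\simeq H^1(\mathbb{R}^3)\times L^2(\mathbb{R}^3)$ and $D(\mathcal{A})\simeq H^2(\mathbb{R}^3)\times H^1(\mathbb{R}^3)$. Standard semigroup theory then yields both conclusions: for $(f,g)\in\mathcal{E}$, $U(t)=S(t)(f,g)\in C([0,+\infty);\mathcal{E})$, which gives $u\in C([0,+\infty);H^1(\mathbb{R}^3))\cap C^1([0,+\infty);L^2(\mathbb{R}^3))$; and for $(f,g)\in D(\mathcal{A})$, we additionally have $U\in C^1([0,+\infty);\mathcal{E})\cap C([0,+\infty);D(\mathcal{A}))$, which reads as $u\in C([0,+\infty);H^2(\mathbb{R}^3))\cap C^1([0,+\infty);H^1(\mathbb{R}^3))\cap C^2([0,+\infty);L^2(\mathbb{R}^3))$. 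Uniqueness in each class is immediate from the unitarity of $S(t)$ via the energy identity $\|U(t)\|_{\mathcal{E}}=\|U(0)\|_{\mathcal{E}}$.

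The main obstacle is the self-adjointness verification under the discontinuity of $c$: standard elliptic regularity arguments require care since $c^2$ appears in the leading-order term, but multiplying through by $c^{-2}$ reduces the problem to a uniformly elliptic constant-coefficient principal part ($-\Delta$) with an $L^\infty$ zero-order coefficient, for which the whole-space $H^2$-regularity is classical. The piecewise constant structure of $c$ is then never used beyond the $L^\infty$ bounds, so the argument is entirely standard once the weighted Hilbert space framework is fixed.
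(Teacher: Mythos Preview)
Your proposal is correct and shares the paper's core setup: the weighted space $L^2(\mathbb{R}^3;c^{-2}dx)$, the operator $L=-c^2\Delta$ with domain $H^2(\mathbb{R}^3)$, and the identification $D(L^{1/2})=H^1(\mathbb{R}^3)$ via the Dirichlet form. The paper, however, cites \cite[Proposition A.1]{Sh} for self-adjointness rather than proving it, and then invokes the Lions--Magenes variational theory for abstract second-order evolution equations (\cite[Chapter 3, Theorems 8.1--8.2]{LM1}) instead of Stone's theorem and $C_0$-group machinery; for the higher-regularity statement it bootstraps by applying the first claim to $v=u'$, which solves the same equation with data $(g,-Lf)\in H^1\times L^2$. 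Your semigroup route and the paper's variational route are the two standard, equivalent packagings of this result; yours is somewhat more self-contained (you actually prove self-adjointness via surjectivity of $L+I$), while the paper's is shorter because it cites both self-adjointness and the evolution theory.
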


\begin{proof} 

These results are classical but we recall them for sake of completeness.  We consider again the operator $L=-c^2\Delta$ acting on $L^2(\R^3;c^{-2}dx)$ with domain $D(L)=H^2(\R^3)$ and, following \cite[Proposition A.1]{Sh}, we recall that $L$ is the unique selfadjoint operator  associated with the closed, symmetric,  positive quadratic form $a$, acting on $L^2(\R^3;c^{-2}dx)$ with the form domain $D(a):=H^1(\R^3)$, defined by
$$a(h_1,h_2)=\int_{\R^3}\nabla h_1\cdot\overline{\nabla h_2}dx,\quad h_1,h_2\in H^1(\R^3). $$
Using the operator $L$, we can rewrite the problem \eqref{eqqq1} as follows
\begin{equation}\label{eqq1}
\left\{
\begin{array}{ll}
u''(t) + L u(t) = 0, & t\in \mathbb{R}_+ , \\
u(0) = f, \quad u'(0) = g. & 
\end{array}
\right.
\end{equation}
We start with the first claim of the lemma. Fixing $H=L^2(\R^3;c^{-2}dx)$, $V=D(a)=H^1(\R^3)$ and applying \cite[Theorem 8.1, Chapter 3]{LM1} and \cite[Theorem 8.2, Chapter 3]{LM1}, we deduce that for $f\in H^1(\R^3)$ and $g\in L^2(\R^3)$, \eqref{eqq1} admits a unique solution  $u\in  C^1([0,+\infty);L^2(\R^3))\cap  C([0,+\infty);H^1(\R^3))$. This proves the first claim of the lemma. 

Now let us consider the second claim. For this purpose, consider $f\in H^2(\R^3)= D(L)$ and $g\in H^1(\R^3)$, and, applying the first claim, let us consider $u\in  C^1([0,+\infty);L^2(\R^3))\cap  C([0,+\infty);H^1(\R^3))$ the unique solution of \eqref{eqq1}. Notice that $v=u'$ solves the problem
\begin{equation}\label{eqq2}
\left\{
\begin{array}{ll}
v''(t) + L v(t) = 0, & t\in \mathbb{R}_+ , \\
v(0) = g, \quad v'(0) = -Lf. & 
\end{array}
\right.
\end{equation}
Therefore, repeating the above argumentation, we deduce that $v\in C^1([0,+\infty);L^2(\R^3))\cap  C([0,+\infty);H^1(\R^3))$. This implies that $u\in  C^2([0,+\infty);L^2(\R^3))\cap  C^1([0,+\infty);H^1(\R^3))\cap C([0,+\infty);D(L))$. Recalling that  
$D(L)=H^2(\R^3)$, this completes the proof of the  lemma.

\end{proof}

\subsection{Point source identification}
In all this section, we assume that $\mathcal O$ is a $C^2$ bounded and connected domain in $\R^3$. Let $a:=(a_{i,j})_{1 \leq i,j \leq 3} \in C^2(\overline{\mathcal O};\R^{3^2})$, be symmetric:
$$ a_{i,j}(x)=a_{j,i}(x),\ x \in \overline{\mathcal O},\ i,j = 1,2,3, $$
and satisfying the ellipticity condition
\bel{a1}
\exists b>0,\ \sum_{i,j=1}^3 a_{i,j}(x) \xi_i \xi_j \geq b |\xi|^2,\ x \in \overline{\mathcal O},\ \xi=(\xi_1,\xi_2,\xi_3) \in \R^3.
\ee
 We define the operator $\mathcal A$ by
$$ 
\mathcal A v(x) :=-\sum_{i,j=1}^3 \partial_{x_i} 
\left( a_{i,j}(x) \partial_{x_j} v(x) \right),\  x\in \mathcal O. 
$$ 
Fixing $x_1,\ldots,x_N$, $N$ distinct points in $\mathcal O$ and $\lambda_1,\ldots,\lambda_N\in \R\setminus\{0\}$, we consider the solution in the transposition sense of the boundary value problem
\begin{equation}\label{bvp1}\left\{\begin{array}{ll}\mathcal A v=\sum_{k=1}^{N}\lambda_k\delta_{x_k},\quad &\textrm{in}\ \mathcal O,\\  v=0, &\textrm{on}\ \partial \mathcal O.\end{array}\right.\end{equation}
We denote by $\partial_{\nu_a}$  the conormal derivative associated with the coefficient $a$ of $\partial \mathcal O$ defined by
$$\partial_{\nu_a}v(x)=\sum_{j=1}^3a_{ij}(x)\partial_{x_j}v(x)\nu_i(x),\quad x\in\partial \mathcal O,$$
where $\nu=(\nu_1,\nu_2,\nu_3)$ denotes the outward unit normal vector to $\partial\mathcal O$.
We show the following result about unique recovery of point sources of elliptic equations as well as properties of the solution of \eqref{bvp1}.

\begin{Thm}\label{t3} 
For  all $r>0$, we denote by $\mathcal O_r$  the open subset of $B_{R_0}$ defined by 
$$\mathcal O_r:=\mathcal O\setminus\cup_{k=1}^{N} \overline{B(x_k,r)}.$$
Problem \eqref{bvp1} admits a unique solution in the transposition sense lying in $L^2(\mathcal O)$ and, choosing $r>0$ sufficiently small, we have $\mathcal O_r\neq\emptyset$, $\partial\mathcal O\subset\partial\Omega_r$  and  $v|_{\mathcal O_r}\in H^2(\mathcal O_r)$. In addition, for any $w\in H^2(\mathcal O)$ we have
\begin{equation}\label{t3a}\sum_{k=1}^{N}\lambda_kw(x_k)=- \int_{\partial \mathcal O}\partial_{\nu_a} v wd\sigma(x)+\int_{ \mathcal O} v \mathcal A wdx.  \end{equation}
In addition, for j=1,2, fixing $S$ an arbitrary not empty open subset of $\partial \mathcal O$ and $v_j$ the solution of \eqref{bvp1}, with $N=N_j$, $x_k=x_k^j$, $\lambda_k=\lambda_k^j$, the condition
\begin{equation}\label{t3b}  \partial_\nu v_1(x)=\partial_\nu v_2(x),\quad x\in S\end{equation}
implies that $N_1=N_2=N$ and, there exists a one-to-one map $\sigma$ of $\{1,\ldots,N\}$ such that
\bel{t3c}x_k^1=x_{\sigma(k)}^2,\quad \lambda_k^1=\lambda_{\sigma(k)}^2,\quad k=1,\ldots,N.\ee
\end{Thm}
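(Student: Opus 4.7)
The plan is to treat the four assertions in sequence: existence/uniqueness of the transposition solution, the $H^2$-regularity of $v$ away from its singular support, the Green-type identity \eqref{t3a}, and the point-source identification result. For existence, given $F\in L^2(\mathcal{O})$, let $w_F\in H^2(\mathcal{O})\cap H^1_0(\mathcal{O})$ be the unique Lax--Milgram solution of $\mathcal{A}w_F=F$ with zero Dirichlet data; the $H^2$-regularity uses $a\in C^2(\overline{\mathcal{O}})$ and $\partial\mathcal{O}\in C^2$. Since $\mathcal{O}\subset\R^3$, the Sobolev embedding $H^2(\mathcal{O})\hookrightarrow C(\overline{\mathcal{O}})$ turns $F\mapsto \sum_{k=1}^N\lambda_k w_F(x_k)$ into a continuous linear functional on $L^2(\mathcal{O})$; the Riesz representation theorem then delivers a unique $v\in L^2(\mathcal{O})$ with $\int_{\mathcal{O}}vF\,dx=\sum_k\lambda_k w_F(x_k)$ for every $F\in L^2(\mathcal{O})$, which is the transposition solution of \eqref{bvp1}. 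For the regularity claim, $v$ satisfies $\mathcal{A}v=0$ distributionally on $\mathcal{O}\setminus\{x_1,\dots,x_N\}$, so interior elliptic regularity gives smoothness there; choosing $r>0$ small enough that the balls $\overline{B(x_k,r)}$ are pairwise disjoint and contained in $\mathcal{O}$, and introducing a cutoff $\chi\in C^\infty(\overline{\mathcal{O}})$ equal to $1$ near $\partial\mathcal{O}$ and supported away from the $x_k$'s, the function $\chi v$ solves a Dirichlet problem on $\mathcal{O}$ with $L^2$ right-hand side. Standard $H^2$-regularity on the $C^2$ domain $\mathcal{O}$ gives $\chi v\in H^2(\mathcal{O})$, and combining with interior smoothness yields $v|_{\mathcal{O}_r}\in H^2(\mathcal{O}_r)$ with $v|_{\partial\mathcal{O}}=0$ in the trace sense.

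To prove \eqref{t3a}, decompose any $w\in H^2(\mathcal{O})$ as $w=w_0+\tilde w$, where $w_0\in H^2(\mathcal{O})\cap H^1_0(\mathcal{O})$ solves $\mathcal{A}w_0=\mathcal{A}w$ and $\tilde w:=w-w_0$ is $\mathcal{A}$-harmonic with $\tilde w|_{\partial\mathcal{O}}=w|_{\partial\mathcal{O}}$. The defining property of $v$ immediately gives $\int_{\mathcal{O}}v\,\mathcal{A}w\,dx=\sum_k\lambda_k w_0(x_k)$. The remaining term $\sum_k\lambda_k\tilde w(x_k)$ is recovered by applying Green's formula on $\mathcal{O}_r$ to the two $\mathcal{A}$-harmonic, $H^2$-functions $v|_{\mathcal{O}_r}$ and $\tilde w$: the volume terms vanish, leaving a boundary integral on $\partial\mathcal{O}\cup\bigcup_k\partial B(x_k,r)$; on $\partial\mathcal{O}$ one uses $v=0$ and $\tilde w=w$ to identify the contribution $-\int_{\partial\mathcal{O}}w\,\partial_{\nu_a}v\,d\sigma$, while on each small sphere the local expansion of $v$ near $x_k$ as $\lambda_k$ times the fundamental solution of $\mathcal{A}$ (constructed by freezing coefficients at $x_k$ and correcting with a Green's-function term) plus an $H^2$-remainder causes the spherical integrals to converge to $\lambda_k\tilde w(x_k)$ as $r\to 0$. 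Reassembling these contributions produces \eqref{t3a}.

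For the identification in (iv), set $u:=v_1-v_2$, an $L^2$-function solving $\mathcal{A}u=0$ on $\mathcal{O}\setminus F$, where $F:=\{x_k^j:j=1,2,\,k=1,\dots,N_j\}$ is finite. Combining $u|_{\partial\mathcal{O}}=0$ with \eqref{t3b} gives zero Cauchy data on the non-empty open set $S\subset\partial\mathcal{O}$, since the vanishing trace forces the tangential gradient to vanish on $\partial\mathcal{O}$. By hypoellipticity $u$ is $C^2$ in a neighborhood of $S$ inside $\mathcal{O}\setminus F$, so the classical unique continuation theorem for second-order elliptic operators with $C^{1,1}$ coefficients propagates $u\equiv 0$ from a neighborhood of $S$ throughout the connected open set $\mathcal{O}\setminus F$ (the removal of a finite set from a connected open subset of $\R^3$ preserves connectedness). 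Therefore $v_1=v_2$ as distributions in $\mathcal{O}$, and applying $\mathcal{A}$ gives $\sum_{k=1}^{N_1}\lambda_k^1\delta_{x_k^1}=\sum_{k=1}^{N_2}\lambda_k^2\delta_{x_k^2}$; comparing the atoms of these two discrete measures forces $N_1=N_2=N$ and \eqref{t3c}. I expect the main obstacle to be the spherical limit in step (iii): it demands a quantitative description of the singular part of $v$ near each $x_k$ via a parametrix or Green's-function construction for $\mathcal{A}$, so that the leading singular contributions produce exactly $\lambda_k\tilde w(x_k)$ while the lower-order terms genuinely vanish in the limit.
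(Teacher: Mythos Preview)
Your proposal is correct and broadly parallel to the paper's argument, but it diverges in how the regularity claim and the Green-type identity \eqref{t3a} are obtained. For the $H^2$-regularity on $\mathcal{O}_r$, you invoke interior elliptic regularity plus a cutoff near $\partial\mathcal{O}$; the paper instead constructs (via \cite{Ka}) a classical fundamental solution $P(x,y)$ for $\mathcal{A}$ on a larger ball and shows directly that $v(x)=\sum_k\lambda_k\bigl(P(x,x_k)+w_k(x)\bigr)$ with $w_k\in H^2(\mathcal{O})$ harmonic-type correctors. This explicit representation simultaneously yields the $H^2$-regularity away from the sources \emph{and} the local singular structure of $v$, so the paper never needs the spherical limiting argument you flagged as the main obstacle. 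For \eqref{t3a} the paper extends $w$ to an element of $H^2_0(B_{R_1})$, applies the defining fundamental-solution identity $\int P(x,x_k)\mathcal{A}w\,dx=w(x_k)$, and integrates by parts only on the smooth exterior region $B_{R_2}\setminus\mathcal{O}$; there is no $r\to 0$ limit at all. Your route---decomposing $w=w_0+\tilde w$, handling $w_0$ via the transposition definition and $\tilde w$ via Green's formula on $\mathcal{O}_r$---is correct, but the limit on the small spheres indeed requires precisely the parametrix/fundamental-solution expansion you anticipate, so in the end both approaches rest on the same analytic input; the paper's packaging simply makes that input do double duty. Your treatment of existence/uniqueness and of the identification step (unique continuation from Cauchy data on $S$, then comparing atoms of the two Dirac sums) matches the paper's.
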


\begin{proof} The proof of these results for \eqref{bvp1} with $\mathcal A=-\Delta$ can be deduced by following the argumentation in \cite{EH1,EH2}. We give here an extension of such arguments to more general class of elliptic operators with variable coefficients. Let us first observe that by the Sobolev embedding theorem $H^2(\mathcal O)$ embedded continuously into $C(\overline{\mathcal O})$ and therefore \eqref{bvp1} admits a unique solution in the transposition sense $v$ which will be the unique element $v\in L^2(\mathcal O)$ satisfying the condition
$$\int_{\mathcal O}v\mathcal A wdx=\sum_{k=1}^{N}\lambda_kw(x_k),\quad w\in H^1_0(\mathcal O)\cap H^2(\mathcal O).$$
Now let us fix $r>0$ sufficiently small such that $\mathcal O_r\neq\emptyset$, $\partial \mathcal O\subset\partial\mathcal O_r$. In order to prove that $v|_{\mathcal O_r}\in H^2(\mathcal O_r)$, let us fix  $R_1>R_2>0$ such that $\overline{\mathcal O}\subset B_{R_2}$ and consider the differential operator $\mathcal A$ acting on $B_{R_1}$ by extending its coefficients with $C^2$ regularity. By the theory of fundamental solutions of elliptic equations (see e.g. \cite[Theorem 3]{Ka}), there exists a function $P$ defined in $B_{R_1}^2\setminus D$, with $D:=\{(x,y)\in B_{R_1}^2:\ x=y\}$, satisfying the following properties:
\begin{itemize}
\item[(1)] For all $y\in B_{R_1}$, the map $x\mapsto P(x,y)\in C^2(B_{R_1}\setminus \{y\})$ and 
$$ |P(x,y)|\leq C|x-y|^{-1},\quad \mathcal A P(x,y)=0,\quad x\in B_{R_1}\setminus \{y\}.$$
\item[(2)] For all $\psi \in C^\infty_0(B_{R_1})$ we have
\bel{fond2}\int_{B_{R_1}}P(x,y)\mathcal A\psi(x)\,\d x=\psi(y).\ee
\end{itemize}
Then, for $k=1,\ldots,N$, fixing $w_k$ the $H^2(\mathcal O)$-solution  of the boundary value problem 
\begin{equation}\label{eq3}
\left\{\begin{aligned}
\mathcal{A} w_k &= 0, && \mbox{in }\mathcal O,\\
 w_k(x)&= -P(x,x_k), && x\in\partial \mathcal O,
\end{aligned}\right.
\end{equation}
one can check that the solution $v$ of \eqref{bvp1} takes the form
\bel{t3f}v(x)=\sum_{k=1}^N\lambda_k(P(x,x_k)+w_k(x)),\quad x\in \mathcal O,\ee
which proves that $v|_{\mathcal O_r}\in H^2(\mathcal O_r)$.

Now let us consider the identity \eqref{t3a}. Without loss of generality, we may assume that $N=1$ and $x_1=0$. Fix $w\in H^2(\mathcal O)$
and, notice that for $w_1$ the solution of \eqref{eq3}, we have
\begin{equation}\label{t3g}\int_{ \mathcal O}  \mathcal A w w_1dx= \int_{\partial \mathcal O}\partial_{\nu_a} w(x) P(x,0)d\sigma(x)+\int_{\partial \mathcal O}\partial_{\nu_a} w_1 wd\sigma(x).  \end{equation}
In addition,  by density, we can prove that \eqref{fond2} is still true for $\psi\in H^2_0(B_{R_1})$,  where $H^2_0(B_{R_1})$ denotes the closure of $C^\infty_0(B_{R_1})$ in $H^2(B_{R_1})$.  Extending $w$ into an element of $H^2_0(B_{R_1})$, still denoted by $w$, and satisfying $w=0$ on $B_{R_1}\setminus B_{R_2}$,  we deduce from \eqref{fond2} that
$$\int_{B_{R_2}}P(x,0)\mathcal A w(x)\,\d x=\int_{B_{R_1}}P(x,0)\mathcal A w(x)\,\d x=w(0).$$
Recalling that $x\mapsto P(x,0)\in C^2(\overline{B_{R_2}}\setminus \mathcal O)$ and, by integration by parts, we get
\begin{align*}
\int_{B_{R_2}\setminus \mathcal O}P(x,0)\mathcal A w(x)\,\d x&=\int_{\partial \mathcal O}P(x,0)\partial_{\nu_a}w(x)\,\d\sigma(x)-\int_{\partial \mathcal O}\partial_{\nu_a} P(x,0)w(x)\,\d\sigma(x)\\
&\ \ +\int_{B_{R_2}\setminus \mathcal O}\mathcal A P(x,0)w(x)\,\d x\\
&=\int_{\partial \mathcal O}P(x,0)\partial_{\nu_a}w(x)\,\d\sigma(x)-\int_{\partial \mathcal O}\partial_{\nu_a} P(x,0)w(x)\,\d\sigma(x).\end{align*}
Then, it follows that
\begin{align*}\int_{\mathcal O}P(x,0)\mathcal A w(x)\,\d x&=\int_{B_{R_2}}P(x,y)\mathcal A w(x)\,\d x-\int_{B_{R_2}\setminus \mathcal O}P(x,0)\mathcal A w(x)\,\d x\\
&=w(0)-\int_{\partial \mathcal O}P(x,0)\partial_{\nu_a}w(x)\,\d\sigma(x)+\int_{\partial \mathcal O}\partial_{\nu_a} P(x,0)w(x)\,\d\sigma(x),
\end{align*}
Combining this identity with \eqref{t3g} and applying the representation \eqref{t3f} of the solution in the transposition sense of \eqref{bvp1}, we deduce easily that \eqref{t3a} holds true.

We will know complete the proof of the theorem by showing that \eqref{t3b} implies \eqref{t3c}. For this purpose, let $v=v_1-v_2$ and notice that $v\in L^2(\mathcal O)$ is the solution in the transposition sense of
\begin{equation}\label{bvp2}\left\{\begin{array}{ll}\mathcal A v=\sum_{k=1}^{N_1}\lambda_k^1\delta_{x_k^1}-\sum_{k=1}^{N_2}\lambda_k^2\delta_{x_k^2},\quad &\textrm{in}\ \mathcal O,\\  v=0, &\textrm{on}\ \partial \mathcal O.\end{array}\right.\end{equation}
Choosing $r>0$ sufficiently small, we deduce that the set
$$U_r=\mathcal O\setminus\cup_{j=1}^2\cup_{k=1}^{N} \overline{B(x_k^1,r)}$$
is connected, $v|_{|U_r}\in H^2(U_r)$ and, applying \eqref{t3b}, we get
$$\left\{\begin{array}{ll}\mathcal A v=0,\quad &\textrm{in}\ U_r,\\  v=\partial_{\nu_a}v=0, &\textrm{on}\ S\subset\partial U_r.\end{array}\right.$$
 Therefore by unique continuation for elliptic equations we deduce that $v=0$ on $U_r$. Using the fact that here $r>0$ can be chosen arbitrary small and $v\in L^2(\mathcal O)$, we deduce that $v\equiv 0$ which implies that 
$$\sum_{k=1}^{N_1}\lambda_k^1\delta_{x_k^1}-\sum_{k=1}^{N_2}\lambda_k^2\delta_{x_k^2}=\mathcal A v\equiv0.$$
Combining this identity with the fact that $\lambda_k^j\neq0$, $j=1,2$, $k=1,\ldots,N_j$, we deduce that $N_1=N_2=N$ and there exists a one-to-one map $\sigma$ of $\{1,\ldots,N\}$ such that \eqref{t3c} holds true.
\end{proof}

\section*{Aknowledgement} The work of Y. Kian is supported by the French National Research Agency ANR and Hong Kong RGC Joint Research Scheme for the project IdiAnoDiff (grant ANR-24-CE40-7039). The work of H. Liu is supported by the Hong Kong RGC General Research Funds (projects 11311122, 11300821, and 11303125), the NSFC/RGC Joint Research Fund (project  N\_CityU101/21), the France-Hong Kong ANR/RGC Joint Research Grant, A-CityU203/19.

\end{document}